\documentclass[12pt]{amsart}
\usepackage[english]{babel}
\usepackage[utf8]{inputenc} 
\usepackage[T1]{fontenc}
\usepackage{verbatim}
\usepackage{palatino}
\usepackage{amsmath}
\usepackage{mathabx}
\usepackage{amsbsy}
\usepackage{amssymb}
\usepackage{amsthm}
\usepackage{amsfonts}
\usepackage{graphicx}
\usepackage{esint}
\usepackage{mathtools}
\usepackage{overpic}
\usepackage{mathrsfs}
\usepackage{tabularx}
\usepackage{datetime}
\usepackage{enumerate}
\usepackage{xcolor}
\usepackage{tikz}
\usepackage[pagebackref]{hyperref}
\hypersetup{
   colorlinks,
    linkcolor={red!60!black},
    citecolor={blue!60!black},
    urlcolor={blue!90!black}
}

\renewcommand{\today}{\the\day\ \shortmonthname[\month] \the\year}
\subjclass[2020]{11B30, 28A75, 28A80}
\keywords{Projections, discretised sum-product, discretised ring theorem, additive combinatorics, geometric measure theory}
\thanks{W.O.R is supported in part by an NSERC Alliance International grant administered by Pablo Shmerkin and Joshua Zahl. P.S. is supported by NSERC through a Discovery Grant and an Alliance International Grant, and also by a Simons Fellowship. H.W. is supported by NSF CAREER DMS-2238818 and NSF DMS-2055544.}

\numberwithin{equation}{section}

\theoremstyle{plain}
\newtheorem{theorem}{Theorem}[section]

\newtheorem{proposition}[theorem]{Proposition}

\newtheorem{lemma}[theorem]{Lemma}

\newtheorem{corollary}[theorem]{Corollary}
\newtheorem{prop}[theorem]{Proposition}

\theoremstyle{definition}

\mathtoolsset{showonlyrefs}


\addtolength{\hoffset}{-1.15cm}
\addtolength{\textwidth}{2.3cm}
\addtolength{\voffset}{0.45cm}
\addtolength{\textheight}{-0.9cm}

\newcommand{\ka}{\kappa}
\newcommand{\1}{\mathbf{1}}

\newcommand{\e}{\varepsilon}

\newcommand{\N}{\mathbb{N}}
\newcommand{\Z}{\mathbb{Z}}

\newcommand{\diam}{\operatorname{diam}}

\newcommand{\R}{\mathbb{R}}

\newcommand{\spt}{\operatorname{spt}}

\title{Simple proofs of discretised projection theorems}
\author{William O'Regan}
\author{Pablo Shmerkin}
\author{Hong Wang}
\begin{document}
\maketitle
\begin{abstract}
We give a simple, short and self-contained presentation of  Bourgain's discretised projection theorem from 2010, which is a fundamental tool in many recent breakthroughs in geometric measure theory, harmonic analysis, and homogeneous dynamics. Our main innovation is a short elementary argument that shows that a discretised subset of $\R$ satisfying a weak ``two-ends'' spacing condition is expanded by a polynomial to a set of positive Lebesgue measure.
\end{abstract}

\section{Introduction and statement of main results}

\subsection{Introduction}

Bourgain's discretised projection theorem \cite{Bourgain10} ranks among the most fundamental ``expansion'' estimates in analysis. 

In recent years, this theorem has been an essential tool in the resolution of some longstanding problems in geometric measure theory and harmonic analysis, including the radial projection problem \cite{OSW24, Ren23}, the Furstenberg set problem in the plane \cite{OrponenShmerkin23b, RenWang23}, and the Kakeya problem in $\mathbb{R}^3$ \cite{WangZahl25a, WangZahl25b, WangZahl25c}. To be more precise, none of these works invoke Bourgain's theorem directly, but rather build on the improved Furstenberg estimate in \cite{OrponenShmerkin23}, which in turn uses Bourgain's theorem as a black-boxed key ingredient. Bourgain's projection theorem is also a key (indirect) component in recent progress on the Falconer distance set problem and related problems \cite{RazZahl24, ShmerkinWang25}.

In a different direction, Bourgain's projection theorem has been a key tool in obtaining quantitative equidistribution estimates in ergodic theory. A striking early application (and partly a motivation for its development) was to equidistribution of orbits of nonabelian semigroups on the torus \cite{BFLM11}. Bourgain's theorem is also a crucial component in the recent breakthrough on exponential equidistribution of random walks on quotients of $\textrm{SO}(2,1)$ and $\textrm{SO}(3,1)$ \cite{BenardHe24}, and the related work establishing Khintchine's theorem for self-similar measures \cite{BHZ24}. Again, these recent works do not invoke Bourgain's theorem directly, but rather build on higher rank \cite{He20} and nonlinear \cite{Shmerkin23} versions of it, which in turn apply it as a black box.

Bourgain's original proof is quite involved and densely written. Given its fundamental nature and wide applicability, it is desirable to have a simpler, shorter proof and a more accessible presentation. Achieving this is the purpose of this note.

We comment on the related work of Guth, Katz and Zahl \cite{GKZ21}, who provided a simple and quantitative proof of Bourgain's discretised \emph{sum-product} theorem \cite{bou03}. While there is a clear similarity in spirit, deducing the projection theorem from the sum-product theorem is not straightforward. Our approach is different from that of \cite{GKZ21}, being based instead on the ideas of Edgar and Miller in solving the classical Erd\H{o}s-Volkmann ring conjecture \cite{EdgarMiller03}. It also has the advantage of working directly under weak ``two-ends'' spacing assumptions, as in Bourgain's original theorem. To be more precise, a modification of Edgar and Miller's approach is used to obtain the key expansion estimate, Theorem \ref{thm.expansion} below. The strength of this estimate also allows us to considerably simplify the proof of a key intermediate estimate of sum-product type, Theorem \ref{thm.ring}. To deduce Theorem \ref{thm.projhaus} from Theorem \ref{thm.ring}, we follow Bourgain's scheme, but with some additional simplifications.

Bourgain's original approach to both sum-product and projections involves first proving a structure theorem for sets with ``small'' sumsets, and then using this structure to derive expansion under multiplication. By contrast, our approach (as well as that of \cite{GKZ21}) is more direct, and involves addition and multiplication in an intertwined manner from the start.

We hope that this article will also be useful as an entry point to the recent literature on projection theory. One key feature of this area is the interplay between additive combinatorics and geometric measure theory. In Section \ref{sec:prelim} we have collected some key tools from these two areas that appear over and over in projection theory, and the way they are used here is typical of their use in the area.

\subsection{Main results}

Let $0 < \kappa \leq d,$ and let $C > 1, \delta >0$. We say that $A \subset \R^d$ is a \textit{$(\delta,\kappa,C)$-set} if it is a finite union of balls of radius $\delta,$ and $A$ satisfies the non-concentration condition
\begin{equation}
|A \cap B(x,r)| \leq Cr^\kappa|A| \text{ for all } x \in A \text{ and } r \geq \delta.
\end{equation}
Here, and throughout $|\cdot |$ denotes the Lebesgue measure on the ambient space.

We say that a Radon probability measure supported $\mu$ on $\R^d$ is a $(\kappa,C)$\textit{-measure} if $\mu$ satisfies the following non-concentration condition.
\begin{equation}
    \mu(B(x,r)) \leq Cr^\kappa \text{ for all } x\in \spt \mu \text{ and } r > 0.
\end{equation}

Given a set $A\subset\R$, we denote by $NA$ the $N$-fold sum-set, and by $A^{(N)}$ the $N$-fold product-set, see \S\ref{subsec:notation} for precise definitions. The below expansion estimate, with superficially weaker assumptions, and with a stronger conclusion, is \cite[Theorem 6]{Bourgain10}. It is the main building block for which the other results will follow. 
\begin{theorem}\label{thm.expansion}
	Let $0 < \kappa < 1$ and let $C >0$. Then there exists an integer $N$ depending on $C$ and $\kappa$ only so that the following holds. Let $\mu$ be a $(\kappa,C)$-measure supported on $[-C,C]$. Write $K := \spt \mu$. Then,
    \[ 
        |NK^{(N)} - NK^{(N)}| \gtrsim_{\kappa,C} 1.
    \]
\end{theorem}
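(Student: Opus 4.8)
The plan is to reduce Theorem~\ref{thm.expansion} to a finitary, single‑scale assertion, and then to prove that assertion by an iterative ``sum-product growth'' scheme modelled on the argument of Edgar and Miller.

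\emph{Reduction to a discretised statement.} By standard discretisation and dyadic pigeonholing (of the kind collected in Section~\ref{sec:prelim}), together with a harmless preliminary rescaling and translation of $\mu$, the theorem reduces to the following. There are $N=N(\kappa,C)$ and $c=c(\kappa,C)>0$ so that: whenever $\delta>0$ and $A\subset[-C,C]$ is a $(\delta,\kappa',C')$-set containing $0$, with $\kappa'$ close to $\kappa$ and $C'$ depending only on $\kappa,C$, there is a polynomial $P(x_1,\dots,x_N)$ with integer coefficients, at most $N$ monomials and degree at most $N$, such that
\[
    |P(A^{\times N})|\;\ge\;c .
\]
Granting this at every scale, one takes $\delta=\delta_n\to 0$ with $A_n$ a discretisation of $K$, pigeonholes so that a single $P$ works along a subsequence, and uses that $P(A_n^{\times N})$ lies in an $O_P(\delta_n)$-neighbourhood of the compact set $P(K^{\times N})$ to conclude $|P(K^{\times N})|\ge c$; since $P$ has bounded complexity and integer coefficients, $P(K^{\times N})\subset N''K^{(N'')}-N''K^{(N'')}$ for a suitable $N''=N''(\kappa,C)$, which yields the theorem.

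\emph{The growth iteration.} It remains to expand a single discretised two-ends set by a bounded polynomial. For a union $B$ of $\delta$-intervals let $\beta(B):=\log_{1/\delta}|B|_\delta\in[0,1]$ be its ``$\delta$-dimension'', where $|B|_\delta$ is the number of $\delta$-intervals making up $B$; thus $\beta(A)\ge\kappa'-o(1)$. Beginning with $A$, one repeatedly replaces the current set $B$ by one of $B+B$, $B-B$, $B\cdot B$, or $B+B\cdot B$, chosen so that $\beta$ increases by at least a fixed $c_0=c_0(\kappa',C')>0$ --- unless $\beta(B)$ already exceeds $1-c_0$. The dichotomy that makes this possible is of sum-product type: if $B$ has small additive doubling then, since $\R$ has no nontrivial finite subgroups, $B$ is close at scale $\delta$ to a generalised arithmetic progression, and then $B\cdot B$ or $B+B\cdot B$ spreads over strictly more scales and $\beta$ jumps; if $B$ has large additive doubling, then $B+B$ alone increases $\beta$. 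Since $\beta\le 1$ and increases by at least $c_0$ at each step, the loop halts after $O_{\kappa',C'}(1)$ steps, and composing the moves yields a polynomial of the required bounded complexity. One must verify that the non-concentration property survives each move up to controlled deterioration of its constants --- and it is here that the strength of the Frostman hypothesis \emph{at every scale}, rather than at one, is used.

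\emph{From dimension to measure; the main obstacle.} When the loop halts we have $\beta(B)\ge 1-c_0$, which is \emph{not} yet $|B|\gtrsim 1$: a set may have $\delta$-dimension close to $1$ while its Lebesgue measure is only $\delta^{o(1)}$, the prototype being a very fine arithmetic progression. The iteration must therefore be arranged so that its last move converts near-maximal spreading into genuine density --- morally, once $B$ is spread at every scale down to nearly $\delta$, one further addition makes the representation-count function of the resulting sumset uniformly positive, hence the sumset $\delta$-dense in an interval of length $\gtrsim 1$, which, being a union of $\delta$-intervals, is a full interval of positive measure; absorbing the adversarial, progression-like configurations at this stage is the delicate point. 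The main obstacle, which pervades both the growth step and the endgame, is \emph{uniformity in $\delta$}: the increments $c_0$ and the final density threshold $c$ must depend only on $\kappa$ and $C$. For the growth step this is a quantitative discretised sum-product phenomenon in which $\delta$-dependent losses --- such as the logarithmic loss in the multiplication-table problem --- must not be allowed to accumulate; following the combinatorial argument of Edgar and Miller, which interleaves addition and multiplication from the outset instead of passing through a Freiman-type structure theorem, is what I expect to make this tractable. The passage from near-full dimension to positive Lebesgue measure is exactly where Bourgain's original proof was most intricate, and where the Edgar-Miller-style bookkeeping should streamline matters.
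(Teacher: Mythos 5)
Your outer reduction (discretise, pigeonhole a single polynomial $P$ of bounded complexity, pass to the limit using that $P(A_n^{\times N})$ lies in an $O(\delta_n)$-neighbourhood of the compact set $P(K^{\times N})$) is sound in principle, but the core of the proof is missing: both steps of your ``growth iteration'' are precisely the hard theorems you would need to prove, and neither is established. The dichotomy ``small additive doubling $\Rightarrow$ close to a generalised arithmetic progression $\Rightarrow$ $B\cdot B$ or $B+B\cdot B$ increases $\beta$ by a fixed $c_0$'' \emph{is} the discretised sum-product theorem of Bourgain and Guth--Katz--Zahl; invoking it as a black box inside the iteration is circular for the purposes of this paper, and proving it with constants uniform in $\delta$ is exactly the difficulty you flag but do not resolve. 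Worse, the endgame you describe --- ``one further addition makes the representation-count function uniformly positive'' once $\beta(B)\ge 1-c_0$ --- is false as stated: a $\delta^{c_0}$-separated set of $\delta^{-(1-c_0)}$ points (a sparse arithmetic progression, say) has $\delta$-dimension $1-c_0$, yet all of its bounded iterated sumsets have Lebesgue measure $O(\delta^{c_0})\to 0$. The paper itself warns that iterating $\delta^{-c}$-growth up to positive measure is non-trivial, and your sketch does not supply the mechanism.

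The paper's actual proof sidesteps all of this and never works with $\delta$-dimension at all. Since $\mu$ is a $(\kappa,C)$-measure, the product $\mu^{\times n}$ with $n\kappa>1$ has finite $1$-energy, so Marstrand's projection theorem (Theorem \ref{thm.marstrand}) immediately produces $v\in[1/2,1]^n$ with $|v_1K+\cdots+v_nK|\gtrsim 1$ --- positive Lebesgue measure from the very first step. The only remaining task is to remove the unknown coefficients $v_j$, and this is where the Edgar--Miller idea enters (quite differently from how you describe it): by Blichfeldt's principle one places $\gtrsim 1$ translates of $K^n$, with mutual differences in $N_0K-N_0K$ times lattice vectors, inside a slab whose image under $\pi(x)=v\cdot x$ has bounded length; pigeonholing on the total projected measure forces $\pi$ to be non-injective on the union, and the resulting linear relation lets one substitute $v_n$ by a combination of $v_1,\dots,v_{n-1}$ with coefficients in $NK^{(2)}-NK^{(2)}$. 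Iterating $n-1$ times eliminates all coefficients and yields $|NK^{(N)}-NK^{(N)}|\gtrsim 1$ directly. If you want to salvage your route, you would have to import the full quantitative discretised sum-product machinery plus a separate argument to upgrade $\delta$-dimension $1-o(1)$ to positive measure; I would instead redirect the Edgar--Miller idea to the variable-elimination role it actually plays here.
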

Of note in the above result is that we get expansion all the way to positive Lebesgue measure, while in \cite{GKZ21} the conclusion is only that the set grows by a factor $\delta^{-c}$ for some $c>0$ (and only under a strong spacing assumption), and it is non-trivial to iterate this to get positive measure. Our proof of Theorem \ref{thm.expansion} is very short and relies only on Marstrand's projection theorem (Theorem \ref{thm.marstrand} below).

Below is Bourgain \cite[Theorem 2]{Bourgain10}, which is a hybrid between the discretised sum-product theorem and the discretised projection theorem. Using Theorem \ref{thm.expansion} as a starting point allows us to give a  proof that is substantially shorter and simpler than Bourgain's original one.
\begin{theorem}\label{thm.ring}
        Let $0 < \kappa \leq \sigma <1$. There exist $c, \e, \delta_0 \in (0,1]$, depending on $\kappa$ and $\sigma$ only so that the following holds for all $0 < \delta < \delta_0$. Let $A \subset \R$ be a $(\delta,\kappa,\delta^{-\e})$-set of measure $\le\delta^{1-\sigma}$. Let $\mu$ be a $(\kappa,\delta^{-\e})$-measure supported on $[0,1]$. There exists $x \in \spt \mu$ so that 
    \begin{equation}
        |A+xA| > \delta^{-c}|A|.
    \end{equation}
\end{theorem}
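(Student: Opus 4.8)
The plan is to argue by contradiction. Suppose that $|A+xA|\le\delta^{-c}|A|$ for every $x\in\spt\mu$; I will derive a contradiction with Theorem~\ref{thm.expansion} once $c$ and $\e$ are small enough in terms of $\kappa$ and $\sigma$. We may assume $A\subseteq[0,1]$ and, using that both hypotheses and conclusion are invariant under translating $A$, that $0\in A$; we may also discard the part of $\mu$ inside $B(0,\delta^{2\e/\kappa})$, which has mass $\le\delta^{\e}$, so that $\spt\mu\subseteq[\delta^{2\e/\kappa},1]$. Finally, after a routine pigeonholing over dyadic scales, exactly as in Bourgain's original scheme, we may assume that, up to rescaling a suitable ball to unit size, $\mu$ is a genuine $(\kappa',C_0)$-measure for an absolute constant $C_0$ and some $\kappa'\in[\kappa/2,\kappa]$; we keep writing $\mu,A,\delta$ for the rescaled objects.

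First I would produce a good set of multipliers. Let $\nu$ be $\mu$ smoothed at scale $\rho:=\delta^{\e_1}$ for a constant $\e_1\in(\sigma,1)$; then $\nu$ is still a $(\kappa',O(1))$-measure supported in a bounded interval, with $\spt\nu\subseteq(\spt\mu)_{\rho}$. Applying Theorem~\ref{thm.expansion} to $\nu$ gives an integer $N=N(\kappa)$ with $|N(\spt\nu)^{(N)}-N(\spt\nu)^{(N)}|\gtrsim_\kappa 1$, and since the relevant degree-$N$ polynomials are Lipschitz on a bounded domain, the $O_\kappa(\rho)$-neighbourhood of $E:=N(\spt\mu)^{(N)}-N(\spt\mu)^{(N)}$ has measure $\gtrsim_\kappa 1$; hence we may fix $m\gtrsim_\kappa\rho^{-1}$ points $t_1,\dots,t_m\in E$ that are pairwise $\ge\rho$-separated and satisfy $|t_i|\ge\rho^{\theta}$ for a small constant $\theta$ (the few $t_i$ near $0$ being discarded). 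Next I would propagate the non-expansion hypothesis along $E$ by the elementary sum--product calculus coming from the Plünnecke--Ruzsa inequality and Ruzsa's covering lemma (recorded in \S\ref{sec:prelim}): the \emph{multiplicative step}, that $|A+yA|\le M|A|$ and $x\in\spt\mu$ imply $|A+(xy)A|\le M\,\delta^{-O(c)}\delta^{-2\e/\kappa}|A|$ (cover $yA$ by $\le M\delta^{-2\e/\kappa}$ translates of $A-A$ and use $|A+xA-xA|\le\delta^{-O(c)}|A|$), and the \emph{additive step}, that $|A+y_iA|\le M_i|A|$ for $i\le k$ imply $|A+(y_1+\cdots+y_k)A|\le|A+y_1A+\cdots+y_kA|\le M_1\cdots M_k|A|$. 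Iterating these over products of $\le N$ factors and sums of $\le N$ products of elements of $\spt\mu$, and finally taking one difference, yields $|A+sA|\le M'|A|$ for all $s\in E$, where $M':=\delta^{-\beta(c+\e)}$ with $\beta=\beta(\kappa)$; more generally $|A+t_{i_1}A+\cdots+t_{i_l}A|\le (M')^{l}|A|=:M_l|A|$ for any indices.

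The contradiction then comes from an additive-energy estimate. Put $l:=\lceil 1/\kappa\rceil+1$, so that $l\kappa>1$, and $n:=|A|/\delta\le\delta^{-\sigma}$, and count the approximate solutions $\Sigma$ of $a_0+t_{i_1}a_1+\cdots+t_{i_l}a_l=a_0'+t_{i_1}a_1'+\cdots+t_{i_l}a_l'\pmod{O(\delta)}$ with $a_j,a_j'\in A$ and $i_1,\dots,i_l\in\{1,\dots,m\}$. For fixed indices, Cauchy--Schwarz together with the bound above gives at least $(\prod_j|B_j|)^2/|B_0+\cdots+B_l|$ solutions, where $B_0=A$ and $B_j=t_{i_j}A$; since $|t_{i_j}A|\gtrsim\rho^{\theta}|A|$ this is $\gtrsim\rho^{2l\theta}n^{2l+1}/M_l$, so summing over the $m^{l}$ index tuples, $\Sigma\gtrsim m^{l}\rho^{2l\theta}n^{2l+1}/M_l$. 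Conversely, for fixed $a_j,a_j'$ the equation confines $(t_{i_1},\dots,t_{i_l})$ to an $O_\kappa(\delta)/\max_{1\le j\le l}|a_j-a_j'|$-neighbourhood of a fixed hyperplane, which by the $\rho$-separation of the $t_i$ contains $\lesssim m^{l-1}\big(1+\delta/(\rho\max_{j}|a_j-a_j'|)\big)$ admissible tuples; summing over the $a_j,a_j'$ and using the non-concentration bound $\#\{(a_j,a_j')_{j\le l}: \max_{1\le j\le l}|a_j-a_j'|\le r\}\le (\delta^{-\e}r^{\kappa})^{l}n^{2l}$ — whose reciprocal has average $O_\kappa(\delta^{-\e/\kappa})$, the integral converging precisely because $l\kappa>1$ — gives $\Sigma\lesssim_\kappa m^{l-1}n^{2(l+1)}(1+\delta^{1-\e_1-\e/\kappa})\lesssim_\kappa m^{l-1}n^{2(l+1)}$ (the degenerate tuples with $a_j\approx a_j'$ for all $j\ge1$ contribute only $\lesssim_\kappa m^{l}n^{l+1}$, negligible since $n\gg M'$). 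Comparing the two estimates of $\Sigma$ yields $m\,\rho^{2l\theta}/M_l\lesssim_\kappa n$, i.e. $\delta^{-\e_1(1-2l\theta)+l\beta(c+\e)}\lesssim_\kappa n\le\delta^{-\sigma}$, which is false for small $\delta$ once $\e_1>\sigma$ and $\theta,c,\e$ are small enough in terms of $\kappa,\sigma$. This contradiction proves the theorem.

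The main obstacle is the energy estimate of the last step, and in particular the point that one must work with $l>1/\kappa$ dilates $t_{i_1}A,\dots,t_{i_l}A$ simultaneously: the naive pairwise version ($l=1$) only closes when $\sigma<2\kappa$, and it is exactly the room provided by many dilates — matched by the fact that the polynomial span in Theorem~\ref{thm.expansion} stays large for an $N$ depending on $\kappa$ alone — that makes the argument work for all $\kappa\le\sigma$. Secondary technical points are the scale pigeonholing and smoothing needed to make Theorem~\ref{thm.expansion} applicable to a $(\kappa,\delta^{-\e})$-measure, the careful $\delta$-discretisation in the energy count, and the verification that the parameters $\e_1,\theta,l,c,\e$ can be chosen consistently.
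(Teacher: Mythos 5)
Your proposal is a correct strategy but runs along a genuinely different track from the paper. The paper argues directly: it applies Theorem \ref{thm.expansion} to get a positive-measure set $X=NK^{(N)}-NK^{(N)}$, uses Marstrand's projection theorem on $A^m$ with directions drawn from $X^m$ to produce $x_1,\dots,x_m\in X$ with $|x_1A+\cdots+x_mA|$ nearly full, and then ``peels off'' factors with the Ruzsa triangle inequality along the chain $z_j=w_1\cdots w_j$ to extract a single $x\in\spt\mu$. You instead argue by contradiction: assuming $|A+xA|\le\delta^{-c}|A|$ for all $x\in\spt\mu$, you propagate non-expansion upward through products and sums to all of $NK^{(N)}-NK^{(N)}$ (this is the reverse of the paper's peeling, and needs Ruzsa's covering lemma, which is not in \S\ref{sec:prelim} but is standard), and then contradict Theorem \ref{thm.expansion} via an additive-energy count over $\gtrsim\rho^{-1}$ separated multipliers. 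I checked the energy count and it closes: the Cauchy--Schwarz lower bound, the slab/separation upper bound $\lesssim m^{l-1}(1+\delta/(\rho D))$, and the convergence of $\mathbb{E}[1/D]$ exactly when $l\kappa>1$ are all right, and the exponent comparison works for $\e_1\in(\sigma,1)$. In effect your counting argument is a hand-rolled discrete Marstrand estimate for the grid of directions $\{t_1,\dots,t_m\}^l$, so the two proofs use the same analytic input in different clothing; the paper's version is shorter because it cites Theorem \ref{thm.marstrand} as a black box and avoids the contrapositive bookkeeping.

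The one place where your write-up has a real gap is the rescaling step, which you dismiss with ``we keep writing $\mu,A,\delta$ for the rescaled objects.'' Renormalizing the heavy interval $I_0$ (of length $r_0$) to $[0,1]$ rescales only the multiplier measure, not $A$: a point $y$ in the support of the renormalized measure corresponds to the multiplier $r_0^{-1}(x-x_0)$ with $x,x_0\in\spt\mu$, and the hypothesis $|A+xA|\le\delta^{-c}|A|$ does \emph{not} directly give $|A+r_0^{-1}(x-x_0)A|\le\delta^{-O(c)}|A|$ --- controlling $|r_0A+(x-x_0)A|$ requires in addition a bound on something like $|A+A|$ or $|r_0A+A|$, which is not among your hypotheses. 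This is precisely why the paper states Proposition \ref{thm.ringstrong} with the extra dilation parameter $t$ and works with $t^{-1}A$ as the base set, and why the deduction of Theorem \ref{thm.ring} ends with the case analysis $y\in\{1,x,-x_0\}$ using Pl\"unnecke--Ruzsa, Corollary \ref{cor.sum-to-difference}, and the point $x_2=\max\spt\mu$. Your argument can be repaired the same way (run everything with $r_0^{-1}A$ as the base set, and transfer the contradiction hypothesis using $|A+A|\lesssim|A+x_2A|^2/|x_2A|\le\delta^{-2c}x_2^{-1}|A|$ with $x_2\ge\delta^{\e/\kappa}$), but as written the hypothesis does not survive the renormalization, and this is more than the ``secondary technical point'' you label it as.
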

We emphasize that the above result only assumes a ``two-ends'' non-concentration condition on $A$, that is, it is a $(\delta,\kappa,\delta^{-\e})$-set, where $\kappa>0$ is arbitrarily small and, in particular, may be much smaller than $\sigma$. This is also the case in Bourgain's original formulation but not in \cite{GKZ21}. While there are known methods to reduce to this case from the strongest non-concentration (i.e. $\kappa=\sigma$), we believe that it is valuable to have a simple and direct proof that works under weak non-concentration assumptions.

Finally, below is Bourgain's result on projections, cf. \cite[Theorem 4]{Bourgain10}. In fact, we state a slightly streamlined version that is more suitable for direct application; it matches that of He \cite{He20} for the special case of projections from $\R^2$ to $\R$.

\begin{theorem}\label{thm.projhaus}
    Let $0 < \alpha <2, \beta, \kappa > 0$. There exist $\eta > 0$, $\e, \delta_0 \in (0,1]$, depending on $\alpha, \beta,$ and $\kappa$ only, so that the following holds for all $0 < \delta < \delta_0$. 
    
    Let $\mu$ be a $(\kappa,\delta^{-\e})$-measure on $S^1$. Let $E \subset B^2(0,\delta^{-\e})$ be a $(\delta,\beta,\delta^{-\e})$-set of measure $\le\delta^{2-\alpha}$.  There exists $\Theta \subset \spt\mu$ with $\mu(\Theta) > 1- \delta^\e$  so that for all $\theta \in \Theta$ we have
    \begin{equation}
        |\pi_\theta(G)|_{\delta} > \delta^{-\eta}|E|_{\delta}^{1/2} \text{ for all } G \subset E \text{ with } |G| > \delta^\e|E|.
    \end{equation}
\end{theorem}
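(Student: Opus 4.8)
The plan is to follow Bourgain's scheme: assuming Theorem~\ref{thm.projhaus} fails, I would extract from $E$ a product-type configuration that is compressed by a positive-measure set of projections, and contradict the sum--product estimate Theorem~\ref{thm.ring}. So suppose the conclusion is false; then the ``bad'' set
\[
    B := \bigl\{\theta\in\spt\mu : \exists\, G_\theta\subset E,\ |G_\theta|>\delta^\e|E|,\ |\pi_\theta(G_\theta)|_\delta \le \delta^{-\eta}|E|_\delta^{1/2}\bigr\}
\]
has $\mu(B)\ge\delta^\e$ --- otherwise $\Theta:=\spt\mu\setminus B$ witnesses the theorem --- and for each $\theta\in B$ I fix a witnessing $G_\theta$. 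A few routine reductions cost only $\delta^{-O(\e)}$ factors: a dyadic pigeonhole makes the relevant covering numbers take single dyadic values; since $\mu$ is a $(\kappa,\delta^{-\e})$-measure it gives mass $<\delta^\e$ to every ball of radius $\le\delta^{O(\e)}$, so after discarding a negligible part of $B$ all directions that occur are pairwise $\gtrsim\delta^{O(\e)}$-separated, and hence any two of them form a linear coordinate system on $\R^2$ with bi-Lipschitz constant $\delta^{-O(\e)}$; and the size hypothesis $|E|\le\delta^{2-\alpha}$ bounds $|E|_\delta^{1/2}$ by (essentially) $\delta^{-\alpha/2}$, which will pin down the density exponent fed to Theorem~\ref{thm.ring}. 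One may also assume $|E|_\delta$ is large, the opposite case being elementary from the non-concentration of $E$ and $\mu$.

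The heart of the matter is to replace $E$ by a genuine product set while keeping a positive-measure family of directions in play. I would first rephrase the hypothesis in $L^2$ terms: for $\theta\in B$, Cauchy--Schwarz on $G_\theta$ gives a lower bound for the $\delta$-additive energy of $E$ in the direction $\theta$. Integrating this over $B$ and pigeonholing over dyadic scales and over the combinatorics of the directions, I would isolate two transverse directions $\theta_1,\theta_2\in B$ together with a large subset $S\subset E$ --- common to $G_{\theta_1},G_{\theta_2}$ and, after a further Fubini argument, compressed by $\pi_\theta$ for a positive-$\mu$-measure set of $\theta\in B$ --- such that in the coordinates $\Phi:=(\pi_{\theta_1},\pi_{\theta_2})$ the set $\Phi(S)$ is a subset of density $\ge\delta^{O(\e+\eta)}$ of a product set $A_1\times A_2$ with $|A_i|_\delta\le\delta^{-\eta}|E|_\delta^{1/2}$. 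A dyadic pigeonhole on fibre sizes, the (asymmetric) Pl\"unnecke--Ruzsa and Balog--Szemer\'edi--Gowers tools of Section~\ref{sec:prelim}, and a two-ends/non-concentration reduction --- using crucially that Theorem~\ref{thm.ring} demands only a \emph{weak} spacing condition, whose exponent we may take small --- would then promote this to an honest near-full product $A\times A$ whose factor $A$ is a $(\delta,\kappa_0,\delta^{-\e'})$-set with $|A|\le\delta^{-O(\e+\eta)}|E|_\delta^{1/2}$; with the size hypothesis this forces $|A|\le\delta^{1-\sigma_0}$ for a fixed $\sigma_0=\sigma_0(\alpha)<1$, and we may take $0<\kappa_0\le\min(\kappa,\sigma_0)$.

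With the product structure in hand, I would invoke the ring theorem. For $\theta$ in the surviving positive-$\mu$-measure set $B'$ of directions compressing $S$, the functional $\pi_\theta$ read in the $\Phi$-coordinates has a ``slope'' $s(\theta)$ --- the image of $\theta$ under a fixed M\"obius map, which is bi-Lipschitz away from its pole --- so after restricting $B'$ to a sub-arc, pushing forward and renormalising, the values $s(\theta)$ support a $(\kappa_0,\delta^{-O(\e)})$-probability measure $\nu$ on $[0,1]$. Since $A\times A$ is a near-full product, the bound $|\pi_\theta(G_\theta)|_\delta\le\delta^{-\eta}|E|_\delta^{1/2}$ translates into $|A+s(\theta)A|\le\delta^{-O(\e+\eta)}|A|$ for all $\theta\in B'$, i.e.\ $|A+sA|\le\delta^{-O(\e+\eta)}|A|$ for every $s\in\spt\nu$. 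But Theorem~\ref{thm.ring}, applied to $A$ and $\nu$ with the pair $(\kappa_0,\sigma_0)$, produces some $s\in\spt\nu$ with $|A+sA|>\delta^{-c}|A|$, $c=c(\kappa_0,\sigma_0)$; choosing $\eta,\e$ small enough that the accumulated loss $O(\e+\eta)$ is $<c$ gives the contradiction. (A mild scale degradation during the two-ends reduction is harmless, as Theorem~\ref{thm.ring} holds at every scale below $\delta_0$.)

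I expect the genuine obstacle to be the second paragraph: producing a \emph{single} product set whose factor simultaneously has cardinality $\approx|E|_\delta^{1/2}$, carries the weak two-ends bound required by Theorem~\ref{thm.ring}, and is compressed by a \emph{positive-measure} family of the original directions. The difficulty is precisely that the witnesses $G_\theta$ depend on $\theta$, so the coordinate change adapted to two fixed directions must be interleaved with $L^2$/Fubini-type arguments that preserve control of the remaining directions; carrying out this interleaving, keeping every loss of the polynomial shape $\delta^{-O(\e+\eta)}$, and fixing a consistent parameter hierarchy $\eta\ll\e\ll\e'\ll\kappa_0\le\sigma_0$ (with $c=c(\kappa_0,\sigma_0)$ chosen last) is where most of the work lies. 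The underlying ingredients --- the dual-Frostman bound $\mu\{\theta:|\pi_\theta v|\le\delta\}\lesssim(\delta/|v|)^{\kappa}$ for $|v|\ge\delta$ and the additive-combinatorial lemmas --- are exactly the standard tools assembled in Section~\ref{sec:prelim}.
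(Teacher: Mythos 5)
Your overall strategy --- argue by contradiction, extract a product structure in two transverse directions, and feed one factor into Theorem \ref{thm.ring} --- is the right one, and you have correctly located the crux: the witnesses $G_\theta$ vary with $\theta$, so it is unclear how to produce one product set compressed by a positive-$\mu$-measure family of directions. But your proposal leaves exactly that step as a sketch (``$L^2$/Fubini-type arguments that preserve control of the remaining directions''), and the route you indicate --- forcing a single set $S$ that is simultaneously dense in a product and compressed by many $\pi_\theta$ --- would be very hard to carry out, because the compression hypothesis for $\theta$ in the bad set only constrains the $\theta$-dependent sets $G_\theta$, not any fixed $S$. As written, the central step of the argument is missing.

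The paper closes this gap with two devices absent from your outline. First, Lemma \ref{lem.proj-product-case}: if $G\subset A\times B$ is \emph{any} dense subset of a product (with no structure imposed on $G$ itself), then $|\pi(G)|\ge |G|\,|A+xA|/(|A-A|\,|A-B|)$, where $x$ is the slope of the projection in product coordinates. This is what renders the $\theta$-dependence of the witnesses harmless: one builds the product container $B_1\times B_2\supset F$ only \emph{once}, using just three bad directions --- two to set up coordinates, with the Frostman condition on the projected factors supplied by Kaufman's theorem (Theorem \ref{thm.kauf}) together with Lemma \ref{lem.energy-to-frostman} (an ingredient your sketch needs but does not provide), and a third to control $|A_1+A_2|$ via Balog--Szemer\'edi--Gowers. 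Then Theorem \ref{thm.ring}, applied to the pushed-forward direction measure, gives $|B_1+xB_1|>\delta^{-c}|B_1|$ for some bad slope $x$, and Lemma \ref{lem.proj-product-case} converts this into a lower bound on $|\pi_\theta(G)|$ for \emph{every} dense $G$, contradicting the choice of witnesses. Second, this argument only yields the conclusion for dense subsets of some large $F\subset E$ rather than of $E$ itself; that is the content of Proposition \ref{prop.proj1}, and the paper upgrades it to Theorem \ref{thm.projhaus} by exhausting $E$ into disjoint pieces $F_1,\dots,F_N$ and averaging over the product measure $\mu\times\rho$. Without Lemma \ref{lem.proj-product-case} (or an equivalent) and this exhaustion step, the proposal does not constitute a proof.
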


\medskip

\subsection*{Acknowledgements}
W.O.R would like to thank Andr\'as M\'ath\'e for his numerous suggestions.

\section{Preliminaries}
\label{sec:prelim}

\subsection{Notation}
\label{subsec:notation}

For two functions $f:[0,1] \rightarrow [0,\infty)$ we write $f \lesssim g$ if there exists a constant $C > 0$ so that $f(x) \leq Cg(x)$ for all $x \in [0,1]$. The notation $\sim$ will mean both $\lesssim$ and $\gtrsim$. If we subscript $\lesssim$ with another value this will be used to emphasise dependence. Typically our functions will use the variable $\delta$ or $r$. 

By a measure we always mean a Borel probability measure. For a measure $\mu$ and a set $A$ denote $\mu_{|A}$ to be the measure \textit{conditioned} on $A,$ i.e $\mu_{|A}(\cdot) := \tfrac{\mu(\cdot \cap A)}{\mu(A)}$. 

 For a set $A \subset \R$ and for a lower-case letter $x \in \R$ we denote
\begin{equation}
    xA := \{xa: a \in A\}.
\end{equation}
For an upper-case letter $N \in \N$ we denote the \textit{$N$-fold sum-set} by
\begin{equation}
    NA := \{a_1 + \cdots + a_N : a_j \in A, 1 \leq j \leq N\}. 
\end{equation}
We also denote the \textit{$N$-fold product-set} by
\begin{equation}
    A^{(N)} := \{a_1\cdots a_N : a_j \in A, 1 \leq j \leq N\}. 
\end{equation}

Given a bounded set $A \subset \R^d,$ we denote its diameter by $\diam(A)$ and its $r$-neighbourhood by $A^{(r)}$.

Last, but not least, we let $|A|_{\delta}$ be the $\delta$-covering number of $A,$ i.e. the least number of balls of radius $\delta$ needed to cover $A$. Note that if $A$ is a union of balls of radius $\delta$, then $|A|_{\delta}\sim \delta^{-d}|A|$. In particular, for such sets many of the additive-combinatorial preliminaries can be stated in terms of Lebesgue measure rather than covering numbers.

\subsection{Additive combinatorics}

We recall some standard results from additive combinatorics and geometric measure theory that we will use, starting with the former. We emphasise that these results are standard and can be found in many references, and all the proofs are elementary and fairly short.

While the results below are typically stated for finite sets, it is a straightforward deduction to see that they also hold for $\delta$-covering numbers, by e.g. applying them to $\delta$-grid elements intersecting the relevant sets. See \cite{GKZ21} for details in some cases. This entails the loss of multiplicative constants that are of no consequence for our purposes.

We start with the Ruzsa triangle inequality; it is a direct consequence of the identity $x-y = (x-z) - (y-z)$, see \cite[Lemma 2.6]{TaoVu10}.
\begin{lemma}[Triangle inequality]\label{lem.triangleineq}
  Let $X,Y,Z$ be subsets of $\mathbb{R}$. Then
  \[
    |X-Z|_{\delta} \lesssim \frac{|X-Y|_{\delta} \, |Y-Z|_{\delta}}{|Y|_{\delta}}.
  \]
\end{lemma}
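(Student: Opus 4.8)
The plan is to reduce to the finite-set statement and then apply Ruzsa's classical injection. First I would discretise: fix the partition of $\R$ into the half-open intervals $[j\delta,(j+1)\delta)$, $j\in\Z$, and for a bounded set $A\su\R$ let $A^*\su\delta\Z$ be the set of left endpoints of those intervals that meet $A$. Every point of $A^*$ lies within $\delta$ of $A$ and every point of $A$ lies within $\delta$ of $A^*$, so $|A|_\delta\sim|A^*|$ (the bound $|A|_\delta\le|A^*|$ because each interval of $A^*$ sits inside one $\delta$-ball), and, for any bounded $B,C$, one gets $|B^*-C^*|\sim|B-C|_\delta$: the difference set $B^*-C^*$ is a subset of the lattice $\delta\Z$ lying in the $2\delta$-neighbourhood of $B-C$, so its points are $\delta$-separated and therefore few, giving $|B^*-C^*|\lesssim|B-C|_\delta$; and conversely $B-C$ lies in the $2\delta$-neighbourhood of $B^*-C^*$, giving $|B-C|_\delta\lesssim|B^*-C^*|$. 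All implied constants here are absolute, in particular independent of $\delta$. Hence it suffices to prove the inequality $|X'-Z'|\,|Y'|\le|X'-Y'|\,|Y'-Z'|$ for arbitrary \emph{finite} sets $X',Y',Z'\su\R$, and then apply it with $X'=X^*$, $Y'=Y^*$, $Z'=Z^*$.

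For the finite statement I would use Ruzsa's embedding trick. Choose, for each $d\in X'-Z'$, a representation $d=a_d-c_d$ with $a_d\in X'$ and $c_d\in Z'$, and define
\[
\Phi:(X'-Z')\times Y'\longrightarrow(X'-Y')\times(Y'-Z'),\qquad \Phi(d,b)=(a_d-b,\,b-c_d).
\]
The image lands in $(X'-Y')\times(Y'-Z')$ by construction, and $\Phi$ is injective because from a value $(u,v)$ one recovers $d=u+v=a_d-c_d$, hence the fixed pair $(a_d,c_d)$, hence $b=a_d-u$. Comparing cardinalities gives $|X'-Z'|\,|Y'|\le|X'-Y'|\,|Y'-Z'|$. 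Feeding this into the reduction of the first paragraph yields
\[
|X-Z|_\delta\lesssim|X^*-Z^*|\le\frac{|X^*-Y^*|\,|Y^*-Z^*|}{|Y^*|}\lesssim\frac{|X-Y|_\delta\,|Y-Z|_\delta}{|Y|_\delta},
\]
which is the lemma (the last step uses $|X^*-Y^*|\lesssim|X-Y|_\delta$, $|Y^*-Z^*|\lesssim|Y-Z|_\delta$, and $|Y^*|\ge|Y|_\delta$).

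The proof has no real obstacle: the entire combinatorial content is the identity $x-z=(x-y)+(y-z)$ repackaged as the injection $\Phi$. The only place demanding care is the discretisation bookkeeping of the first paragraph, where one must verify that the comparisons between covering numbers and their grid surrogates hold in both directions with $\delta$-independent constants, so that the clean finite inequality survives the passage to $|\cdot|_\delta$. Alternatively, one may bypass the first paragraph altogether by invoking \cite[Lemma 2.6]{TaoVu10} for finite sets together with the general transference principle, recalled just before the statement, that such additive-combinatorial estimates automatically hold for $\delta$-covering numbers.
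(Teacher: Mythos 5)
Your proposal is correct and follows exactly the route the paper intends: the paper simply cites the finite-set Ruzsa triangle inequality (the identity $x-z=(x-y)+(y-z)$ packaged as the Ruzsa embedding) together with the general remark that such estimates transfer to $\delta$-covering numbers via the $\delta$-grid, and you have merely written out both halves in detail. Both the injectivity of $\Phi$ and the two-sided comparisons $|A|_\delta\sim|A^*|$, $|B-C|_\delta\sim|B^*-C^*|$ check out with absolute constants, so nothing further is needed.
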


We will use the following version of the Pl\"{u}nnecke--Ruzsa inequality, see \cite[Theorem 6.1]{Ruzsa09}. 
\begin{prop}[Pl\"unnecke--Ruzsa inequality]\label{prop.pr}
  Let $X,Y_1,\ldots,Y_k$ be subsets of $\R$ with 
  \[
    |X+Y_i|_{\delta}\le \alpha_i |X|_{\delta}.
 \] 
 Then 
  \[
    |Y_1+\ldots+ Y_k|_{\delta} \lesssim_k \alpha_1\cdots \alpha_k |X|_{\delta}.
  \]
\end{prop}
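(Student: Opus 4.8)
\textbf{Plan for the proof of Proposition \ref{prop.pr} (Plünnecke--Ruzsa inequality).}

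The plan is to reduce the general statement to the symmetric case and to the case of a single set $Y$, and then to prove that case by the tensor-power (Ruzsa's) trick. First I would observe that by the Ruzsa triangle inequality (Lemma \ref{lem.triangleineq}) and standard manipulations, it suffices to prove the following ``one set'' version: if $|X+Y|_\delta \le \alpha |X|_\delta$, then $|kY|_\delta \lesssim_k \alpha^k |X|_\delta$ for every $k$; the general statement with distinct $Y_1,\dots,Y_k$ then follows by applying this to $Y := Y_1 \cup \dots \cup Y_k$ (note $|X+Y|_\delta \le \sum_i |X+Y_i|_\delta \le (\sum_i \alpha_i)|X|_\delta$, and $Y_1 + \dots + Y_k \subset kY$), at the cost of the implicit constant depending on $k$; one absorbs the resulting $(\alpha_1+\dots+\alpha_k)^k$ into $\alpha_1\cdots\alpha_k$ up to a $k$-dependent factor via the AM--GM inequality, provided one first reduces to the case $\alpha_i \ge 1$, which is harmless since $|X+Y_i|_\delta \gtrsim |X|_\delta$ always.

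The core is the one-set claim, which I would prove following Petridis's streamlined argument. Pass everything to the $\delta$-grid so that we may work with genuine finite sets $X, Y \subset \frac1\delta \Z$ (the covering-number versions of all inequalities differ from the finite-set versions only by absolute multiplicative constants, as noted in the excerpt). Among all nonempty $X' \subset X$, choose one minimising the ratio $K' := |X'+Y|/|X'|$; note $K' \le K := |X+Y|/|X| \le \alpha$. The key lemma is that for this minimiser, $|X' + Y + Z| \le K'|X'+Z|$ for every finite set $Z$; this is proved by induction on $|Z|$, peeling off one element $z$ of $Z$, writing $X'+Y+Z = (X'+Y+(Z\setminus\{z\})) \cup (X'+Y+z)$, and using inclusion--exclusion together with the minimality of $K'$ applied to the subset of $X'$ whose translate by $z$ is not already covered. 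Iterating this lemma $k-1$ times starting from $Z = Y$ (then $Z = Y+Y$, etc.) gives $|X' + kY| \le (K')^k |X'| \le \alpha^k |X|$, and since $kY \subset (X'+kY) - X'$ has a trivial lower bound... more precisely $|kY| \le |X' + kY|$ is false in general, so instead use $|kY| \le |X'+kY|$ only after noting $|kY|\cdot|X'| \le |X'+kY|\cdot$ — here one simply uses $|kY| \le |X' + kY|$ is replaced by the correct bound $|X'+kY| \ge |kY|$ which does hold since translating $kY$ by each element of $X'$... I would instead conclude cleanly: $kY \subset (x_0 + kY)$ translated, so $|kY| = |x_0 + kY| \le |X' + kY| \le \alpha^k|X'| \le \alpha^k |X|$, giving the claim.

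The main obstacle is getting the combinatorial bookkeeping in the key induction lemma exactly right — in particular, identifying the correct subset of $X'$ to which minimality of the ratio $K'$ is applied at the inductive step, and handling the inclusion--exclusion so that the $+z$ term contributes at most $K'$ times the ``new'' part of $X'+Z$. A secondary, purely cosmetic obstacle is the passage between finite sets and $\delta$-covering numbers: one must check that replacing sets by unions of $\delta$-balls (or by their $\delta$-grid discretisations) and then invoking the finite-set inequality only costs dimension-dependent constants, which is routine and already flagged in the preliminaries. I would present the finite-set proof in full and then remark that the $\delta$-discretised version follows verbatim, citing \cite{GKZ21} for the mechanical details.
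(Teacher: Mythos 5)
The paper gives no proof of this proposition --- it is quoted directly from Ruzsa \cite[Theorem 6.1]{Ruzsa09} --- so there is no in-text argument to compare against; I evaluate your proposal on its own terms. Your single-summand core is essentially correct: the Petridis argument (minimise $K'=|X'+Y|/|X'|$ over nonempty $X'\subset X$, prove $|X'+Y+Z|\le K'|X'+Z|$ by induction on $|Z|$, iterate with $Z=Y,2Y,\dots$, and finish with $|kY|=|x_0+kY|\le|X'+kY|$ for any fixed $x_0\in X'$ --- your hesitation there was unnecessary, this inclusion is immediate) is a standard and complete route to $|kY|_\delta\lesssim_k\alpha^k|X|_\delta$, and the passage to $\delta$-covering numbers is indeed routine.

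The genuine gap is the reduction of the distinct-summand statement to this case via $Y:=Y_1\cup\dots\cup Y_k$. That reduction yields
\[
|Y_1+\dots+Y_k|_\delta\le|kY|_\delta\lesssim_k(\alpha_1+\dots+\alpha_k)^k\,|X|_\delta,
\]
and AM--GM gives $(\alpha_1+\dots+\alpha_k)^k\ge k^k\,\alpha_1\cdots\alpha_k$: the inequality you invoke points the wrong way, and there is no reverse inequality up to $k$-dependent constants (take $k=2$, $\alpha_1=1$, $\alpha_2=M$: the product is $M$ while $(\alpha_1+\alpha_2)^2\sim M^2$). So your argument proves a strictly weaker bound than the one claimed. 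The standard fix is to run the Pl\"unnecke--Petridis machinery on the distinct summands directly: one shows there is a nonempty $X'\subseteq X$ with $|X'+Y_1+\dots+Y_k|\le\alpha_1\cdots\alpha_k|X'|$, applying your key lemma once per summand to a subset minimising the relevant ratio at each stage; organising that induction correctly is precisely the content of Ruzsa's Theorem 6.1 (and of Petridis's proposition for different summands), and it is not obtained by taking unions. In mitigation: every invocation of Proposition \ref{prop.pr} in this paper takes all the $\alpha_i$ equal (the applications are contrapositive, with a common upper bound on each $|X+Y_i|_\delta$), so the weaker estimate $\lesssim_k(\max_i\alpha_i)^k|X|_\delta$ that your reduction does establish would suffice for the paper's purposes --- but it does not prove the proposition as stated.
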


Below is a simple corollary of Pl\"{u}nnecke--Ruzsa inequality and the triangle inequality. 
\begin{corollary}\label{cor.simple}
	Let $X, Y$ be subsets of $\mathbb{R}$, then 
	\begin{equation}\label{eq.simple}
		\max\{|X-X|_{\delta},|X+X|_{\delta}\}\lesssim \frac{ 	|X\pm Y|_{\delta}^2}{ 	|Y|_{\delta}}.
	\end{equation}
\end{corollary}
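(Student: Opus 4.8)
The plan is to combine the two displayed additive-combinatorial results with the elementary symmetries $|A-B|_\delta=|B-A|_\delta$ and $|{-A}|_\delta=|A|_\delta$ of $\delta$-covering numbers. Since the claimed bound is asserted for both sign choices in $|X\pm Y|_\delta$, it suffices to establish the two inequalities $|X-X|_\delta\lesssim |X-Y|_\delta^2/|Y|_\delta$ and $|X+X|_\delta\lesssim |X+Y|_\delta^2/|Y|_\delta$; the remaining two follow by applying these with $Y$ replaced by $-Y$, which turns $|X-Y|_\delta$ into $|X+Y|_\delta$ and vice versa while leaving both $|Y|_\delta$ and the left-hand side unchanged.

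For the difference set, I would apply the Ruzsa triangle inequality (Lemma~\ref{lem.triangleineq}) with $Z=X$, which gives $|X-X|_\delta\lesssim |X-Y|_\delta\,|Y-X|_\delta/|Y|_\delta$; invoking $|Y-X|_\delta=|X-Y|_\delta$ this is precisely $|X-X|_\delta\lesssim |X-Y|_\delta^2/|Y|_\delta$.

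For the sum set, I would invoke the Pl\"unnecke--Ruzsa inequality (Proposition~\ref{prop.pr}) with base set $X_{\mathrm{base}}:=Y$, taking $k=2$ and $Y_1=Y_2=X$. Writing $\alpha:=|X+Y|_\delta/|Y|_\delta$, we have $|Y+X|_\delta=|X+Y|_\delta=\alpha|Y|_\delta$, so the hypothesis holds with $\alpha_1=\alpha_2=\alpha$, and the proposition yields $|X+X|_\delta=|Y_1+Y_2|_\delta\lesssim \alpha^2|Y|_\delta=|X+Y|_\delta^2/|Y|_\delta$. Combining the two displays with the sign substitution described above gives the corollary.

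There is essentially no obstacle here: this is a standard deduction, and the only points requiring (minimal) care are that Lemma~\ref{lem.triangleineq} and Proposition~\ref{prop.pr} are used in their $\delta$-covering-number form, as already discussed at the start of this subsection, and that the degenerate case $Y=\emptyset$ (where $|Y|_\delta=0$ and the asserted inequality is vacuous) should be set aside at the outset.
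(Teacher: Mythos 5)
Your proof is correct and follows essentially the same route as the paper's: the Ruzsa triangle inequality with $Z=X$ for the difference set, the Pl\"unnecke--Ruzsa inequality with base set $Y$ and $Y_1=Y_2=X$ for the sum set, and the substitution $Y\mapsto -Y$ to cover both sign choices. The only (cosmetic) difference is the order in which you invoke the sign symmetry, plus your explicit remark about the degenerate case $Y=\emptyset$.
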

\begin{proof}
    It is enough to consider the case $|X+Y|_{\delta}$, since we can apply this case to $-Y$. The bound for $|X-X|_{\delta}$ follows from the triangle inequality with $Z=X$, and the one for $|X+X|_{\delta}$ follows from Pl\"{u}nnecke--Ruzsa inequality. 
\end{proof}

As a further (well-known) corollary, we have the following relationship between the sizes of $X+Y$ and $X-Y$.
\begin{corollary}
\label{cor.sum-to-difference}
  Let $X,Y$ be subsets of $\mathbb{R}$. Then
    \[
        |X-Y|_{\delta} \lesssim \frac{|X+Y|_{\delta}^3}{|Y|_{\delta}^2}.
    \]
\end{corollary}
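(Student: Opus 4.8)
The plan is to reduce the two‑set difference estimate to a one‑set doubling estimate via the Ruzsa triangle inequality, and then close using Corollary~\ref{cor.simple}. Concretely, I would apply Lemma~\ref{lem.triangleineq} to the triple $X,\,-X,\,Y$, i.e.\ insert $-X$ as the intermediate ``vertex''. Since $X-(-X)=X+X$ and $(-X)-Y=-(X+Y)$ have the same $\delta$‑covering numbers as $X+X$ and $X+Y$ respectively, this yields
\[
|X-Y|_{\delta}\;\lesssim\;\frac{|X+X|_{\delta}\,|X+Y|_{\delta}}{|X|_{\delta}}.
\]
(One may equally insert $-Y$ and obtain the symmetric bound with $|Y+Y|_{\delta}$ and $|Y|_{\delta}$ replacing $|X+X|_{\delta}$ and $|X|_{\delta}$; I would keep both variants at hand.)

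The next step is to control the doubling factor $|X+X|_{\delta}$. For this I would invoke Corollary~\ref{cor.simple} with the ``$+$'' sign and auxiliary set $Y$, which gives $|X+X|_{\delta}\lesssim |X+Y|_{\delta}^{2}/|Y|_{\delta}$ (equivalently, this is Plünnecke--Ruzsa, Proposition~\ref{prop.pr}, with base set $Y$ and $Y_1=Y_2=X$, using $|Y+X|_\delta\le (|X+Y|_\delta/|Y|_\delta)|Y|_\delta$). Substituting this into the displayed inequality produces a bound of the form $|X+Y|_{\delta}^{3}$ over a product of two denominator factors; arranging the roles of the sets so that the two factors combine into $|Y|_{\delta}^{2}$ — using the triangle step in whichever of its two forms makes the bookkeeping come out right — gives the claimed estimate. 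Throughout, the passage from finite sets to $\delta$‑covering numbers costs only harmless multiplicative constants, as recorded in \S\ref{subsec:notation}.

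I do not expect any individual inequality to be the obstacle here: each of Lemma~\ref{lem.triangleineq}, Corollary~\ref{cor.simple}, and Proposition~\ref{prop.pr} is applied in a single line. The only delicate point is the combinatorial bookkeeping — keeping straight which set is inserted as the auxiliary vertex in the triangle inequality and which set plays the role of ``$Y$'' inside Corollary~\ref{cor.simple} — so that after substitution the denominator is exactly $|Y|_{\delta}^{2}$ rather than, say, $|X|_{\delta}|Y|_{\delta}$. Once the roles are fixed consistently, the rest is a routine computation.
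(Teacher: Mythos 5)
Your overall route is the same as the paper's: one application of the Ruzsa triangle inequality with a reflected copy of one of the two sets as the intermediate vertex, followed by Corollary \ref{cor.simple}. The part of your argument that you carry out explicitly is correct: inserting $-X$ gives
\[
|X-Y|_{\delta}\lesssim \frac{|X+X|_{\delta}\,|X+Y|_{\delta}}{|X|_{\delta}},
\qquad
|X+X|_{\delta}\lesssim \frac{|X+Y|_{\delta}^{2}}{|Y|_{\delta}},
\]
and hence $|X-Y|_{\delta}\lesssim |X+Y|_{\delta}^{3}/\bigl(|X|_{\delta}|Y|_{\delta}\bigr)$.

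The gap is precisely in the final sentence of your plan: no assignment of roles turns the denominator into $|Y|_{\delta}^{2}$. The symmetric variant (vertex $-Y$) gives $|X-Y|_{\delta}\lesssim |X+Y|_{\delta}\,|Y+Y|_{\delta}/|Y|_{\delta}$, and the only bound on $|Y+Y|_{\delta}$ available from Corollary \ref{cor.simple} or Pl\"unnecke--Ruzsa is $|Y+Y|_{\delta}\lesssim |X+Y|_{\delta}^{2}/|X|_{\delta}$; the bound $|Y+Y|_{\delta}\lesssim |X+Y|_{\delta}^{2}/|Y|_{\delta}$ that your bookkeeping would require is false in general (take $X$ a single $\delta$-ball and $Y$ a $\delta$-separated Sidon-type set of cardinality $n$: the left-hand side is $\sim n^{2}$ while the right-hand side is $\sim n$). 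So both variants land on the denominator $|X|_{\delta}|Y|_{\delta}$, and the stated $|Y|_{\delta}^{2}$ is out of reach by this argument whenever $|X|_{\delta}<|Y|_{\delta}$. You should not be penalized too harshly for this, however: the paper's own proof (``triangle inequality with $Z=-Y$ followed by Corollary \ref{cor.simple}'') produces exactly the same denominator $|X|_{\delta}|Y|_{\delta}$, so the exponent in the printed statement is best read as a typo for $|X|_{\delta}|Y|_{\delta}$. That weaker but correct form is all that is used elsewhere in the paper, since in every application the two sets have comparable covering numbers.
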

\begin{proof}
    This follows from the triangle inequality with $Z=-Y$ followed by Corollary \ref{cor.simple}.
\end{proof}

We conclude with the Balog-Szemerédi-Gowers theorem in the form we will use it, see \cite[Theorem 2.29 and Ex (6.4.10) on p. 267]{TaoVu10}.

\begin{proposition}[Balog-Szemerédi-Gowers]\label{prop.bsg} There exists a constant $C> 0$ so that the following holds. Let $A,B$ be bounded subsets of $\R$ and let $G \subset A \times B$ be  such that for some $K >1$ we have
\begin{equation}
    |G|_{\delta} \geq |A|_{\delta}|B|_{\delta}/K,
\end{equation}
and \begin{equation}
    |A\overset{G}{+}B|_{\delta} := |\{x+y: (x,y) \in G\}|_{\delta} \leq K|A|_{\delta}^{1/2}|B|_{\delta}^{1/2}. 
\end{equation}
    Then there exist $A' \subset A$ and $B' \subset B$ satisfying
    \begin{equation}
        |A'|_{\delta} \gtrsim K^{-C}|A|_{\delta},
    \end{equation}
    \begin{equation}
        |B'|_{\delta} \gtrsim K^{-C}|B|_{\delta},
    \end{equation}
    \begin{equation}
        |A'+B'|_{\delta} \lesssim K^{C}|A|_{\delta}^{1/2}|B|_{\delta}^{1/2},
    \end{equation}
    and 
    \begin{equation}
        |G \cap (A' \times B')|_{\delta} \gtrsim K^{-C}|A|_{\delta}|B|_{\delta}.
    \end{equation}
\end{proposition}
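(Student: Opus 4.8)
This is the Balog--Szemer\'edi--Gowers theorem, a standard result; I only sketch the proof. By the reduction to $\delta$-grids recalled above it suffices to prove the statement for cardinalities of finite sets $A,B\subset\R$, so assume $|G|\ge|A||B|/K$ and $|S|\le K|A|^{1/2}|B|^{1/2}$, where $S:=A\overset{G}{+}B$. The heart of the matter is a purely graph-theoretic statement: the bipartite graph $G\subset A\times B$ of edge-density $\ge 1/K$ contains subsets $A'\subset A$ and $B'\subset B$ with
\[
    |A'|\gtrsim K^{-O(1)}|A|,\qquad |B'|\gtrsim K^{-O(1)}|B|,\qquad |G\cap(A'\times B')|\gtrsim K^{-O(1)}|A||B|,
\]
and such that every pair $(a,b)\in A'\times B'$ is joined in $G$ by at least $M\gtrsim K^{-O(1)}|A||B|$ \emph{paths of length three}, that is, by $\ge M$ pairs $(b_1,a_1)\in B\times A$ with $(a,b_1),(a_1,b_1),(a_1,b)\in G$. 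I would prove this by dependent random choice. First discard from $A$ the vertices of $G$-degree $<|B|/(2K)$, which diminishes $|A|$ by at most a factor $\lesssim K$ and leaves every remaining vertex with degree $\gtrsim K^{-1}|B|$. Now pick $b_0\in B$ uniformly at random and set $A_0:=\{a:(a,b_0)\in G\}$; then $\mathbb{E}|A_0|=|G|/|B|\gtrsim K^{-1}|A|$, while for a pair $(a,a')$ we have $\pr[\,a,a'\in A_0\,]=|N(a)\cap N(a')|/|B|$ with neighbourhoods taken in $B$, so (fixing a ``poor-pair'' threshold $K^{-t}|B|$ for a suitably large absolute $t$) the expected number of pairs in $A_0\times A_0$ with fewer than $K^{-t}|B|$ common neighbours is at most $K^{-t}|A|^2$. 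A first-moment lower bound for $|A_0|$ together with Markov's inequality for the count of poor pairs yields a $b_0$ with $|A_0|\gtrsim K^{-1}|A|$ and at most $K^{-O(1)}|A_0|^2$ poor pairs inside $A_0$; deleting the vertices of $A_0$ lying in anomalously many poor pairs gives $A'$, and putting $B':=\{\,b:|N(b)\cap A_0|\gtrsim K^{-1}|A_0|\,\}$ (which is large since the reduced $A$ has minimum degree $\gtrsim K^{-1}|B|$) one checks that for $a\in A'$ and $b\in B'$ most of the $\gtrsim K^{-1}|A_0|$ vertices $a_1\in N(b)\cap A_0$ form a non-poor pair with $a$, each of which then contributes $\ge K^{-t}|B|$ admissible $b_1$; this gives the claimed number of length-three paths.

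Granting the graph lemma, the small sumset comes from the identity, valid for $(a,b)\in A'\times B'$ and any length-three path $a\sim b_1\sim a_1\sim b$ in $G$,
\[
    a+b=(a+b_1)-(a_1+b_1)+(a_1+b),
\]
which expresses $a+b$ as $s_1-s_2+s_3$ with $s_1,s_2,s_3\in S$. For fixed $(a,b)$ the map $(b_1,a_1)\mapsto(a+b_1,\,a_1+b_1,\,a_1+b)\in S^3$ is injective --- the first coordinate and $a$ recover $b_1$, the third coordinate and $b$ recover $a_1$ --- so every $v\in A'+B'$ admits at least $M$ representations $v=s_1-s_2+s_3$ with $s_i\in S$; since $s_1-s_2+s_3$ determines $v$, the sets of such triples attached to distinct $v$ are pairwise disjoint subsets of $S^3$, whence $M\,|A'+B'|\le|S|^3\le K^3|A|^{3/2}|B|^{3/2}$ and therefore $|A'+B'|\lesssim K^{C}|A|^{1/2}|B|^{1/2}$ for an absolute constant $C$. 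The asserted bounds on $|A'|$, $|B'|$, and $|G\cap(A'\times B')|$ follow by tracking the losses in the construction (the last one after, if necessary, discarding the $\lesssim K^{-O(1)}$-proportion of $B'$ with few neighbours in $A'$), and undoing the $\delta$-grid reduction completes the proof.

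The only non-formal ingredient is the graph lemma, and the point there that needs care is arranging the random neighbourhood $A_0$ to be \emph{simultaneously} large and dominated by pairs with many common neighbours, and then cleaning it up without spoiling either property; tracking the polynomial-in-$K$ losses (hence the value of $C$) is routine and is not optimised here.
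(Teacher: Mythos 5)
The paper does not prove this proposition; it is quoted as a standard black box from Tao--Vu (Theorem 2.29 and Exercise 6.4.10), so there is no in-paper argument to compare against. Your sketch is the standard dependent-random-choice proof of exactly that result: the paths-of-length-three graph lemma, the identity $a+b=(a+b_1)-(a_1+b_1)+(a_1+b)$ with the injectivity of $(b_1,a_1)\mapsto(s_1,s_2,s_3)$, and the disjointness of the triple-sets over distinct $v$ giving $M\,|A'+B'|\le|S|^3$ --- all of this is correct. The two places that genuinely need care are the ones you already flag: extracting a single $b_0$ for which $|A_0|$ is large \emph{and} the poor-pair count is small (handled by optimising a linear combination of the two random variables rather than by two separate first-moment bounds), and the final refinement of $B'$ to those $b$ with many neighbours in $A'$ (not just in $A_0$) so that the conclusion $|G\cap(A'\times B')|\gtrsim K^{-C}|A|_{\delta}|B|_{\delta}$ actually holds; with those carried out, the sketch is a complete and correct proof.
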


\subsection{Geometric measure theory}

We now recall some standard definitions and results from geometric measure theory. For a measure $\mu$ we let
\begin{equation}
    I_s(\mu) = \iint \frac{1}{|x-y|^s}\,d\mu(x)d\mu(y)
\end{equation}
be the \textit{$s$-energy} of $\mu$. 

The following standard lemmas show that there is a close connection between $s$-energy and $(\delta,s,C)$-sets. For a $(\delta,s,C)$-set $A \subset \R^d$, let $\mu_A$ denote the uniform probability measure on $A$. A straightfoward calculation yields the following. 
\begin{lemma}\label{lem.energy}
    Let $A$ be a $(\delta,s,C)$-set. Then
     \begin{equation}
        I_t(\mu_A) \lesssim_{t,s} C \quad \text{ for } t < s
     \end{equation}
\end{lemma}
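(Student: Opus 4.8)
The plan is to estimate the energy $I_t(\mu_A)$ directly by ``dyadic decomposition'' of the range of distances, using the non-concentration hypothesis to bound the measure of each annulus. First I would fix $t < s$ and write, for each $x \in A$,
\[
    \int \frac{1}{|x-y|^t}\,d\mu_A(y) = \sum_{j \ge 0} \int_{\{y : 2^{-j-1} < |x-y| \le 2^{-j}\}} \frac{1}{|x-y|^t}\,d\mu_A(y),
\]
where (since $A \subset \R^d$ has diameter $\lesssim 1$ after possibly rescaling, or in any case the distances are bounded) the sum effectively runs over $2^{-j} \gtrsim \delta$ down to some fixed scale. On the $j$-th annulus we have $|x-y|^{-t} \le 2^{(j+1)t}$, while the $(\delta,s,C)$-condition gives
\[
    \mu_A\big(\{y : |x-y| \le 2^{-j}\}\big) = \frac{|A \cap B(x,2^{-j})|}{|A|} \le C\, 2^{-js}
\]
for $2^{-j} \ge \delta$, and for the finitely many scales below $\delta$ one uses the trivial bound together with the fact that $A$ is a union of $\delta$-balls, which contributes $O_{t,d}(1)$ (here I am also using that within a single $\delta$-ball the integral $\int_{B(x,\delta)} |x-y|^{-t}\,dy / |B(x,\delta)|$ is $\lesssim_{t,d} \delta^{-t}$, so this ``diagonal'' term is comparable to the $j$ with $2^{-j} \sim \delta$ term anyway). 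Summing the annular contributions gives
\[
    \int \frac{1}{|x-y|^t}\,d\mu_A(y) \lesssim_{t,d} \sum_{j : 2^{-j} \ge \delta} 2^{jt}\, C\, 2^{-js} + O_{t,d}(1) \lesssim_{t,d,s} C,
\]
since $t < s$ makes the geometric series $\sum_j 2^{j(t-s)}$ converge with a bound depending only on $t - s$. Integrating this uniform bound against $d\mu_A(x)$ yields $I_t(\mu_A) \lesssim_{t,s} C$, as claimed (with $d$ fixed; the ambient dimension does not appear in the statement because it is absorbed into the implicit constant, or one simply restricts to the cases $d = 1, 2$ actually used).

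The only mildly delicate point is the bookkeeping at scales below $\delta$: the hypothesis only controls $|A \cap B(x,r)|$ for $r \ge \delta$, so one must treat the innermost annuli separately, but because $A$ is a finite union of $\delta$-balls, a point $x \in A$ lies in one such ball and the contribution of $B(x,\delta)$ to the energy integral is $\lesssim_{t,d} \delta^{-t} \cdot \frac{|B(x,\delta)|}{|A|} \le \delta^{-t} \cdot \frac{|B(x,\delta)|}{|B(x,\delta)|}\cdot\frac{1}{\#\text{balls}}$... more simply, $\int_{B(x,\delta)}|x-y|^{-t}\,dy \lesssim_{t,d}\delta^{d-t}$ and $|A| \ge |B(x,\delta)| \sim \delta^d$, so this term is $\lesssim_{t,d} \delta^{-t}\cdot(\text{something}\le 1)$ — which matches the $2^{-j}\sim\delta$ term in the geometric sum and is therefore absorbed. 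I would also observe that it suffices to prove the bound for $t$ in, say, the range $(s/2, s)$ or even just to note monotonicity is not quite available, so one genuinely does the computation for arbitrary $t<s$; no real obstacle arises. This is a completely routine computation — the ``straightforward calculation'' the statement advertises — so I would present it in two or three lines without belaboring the sub-$\delta$ scales beyond a parenthetical remark.
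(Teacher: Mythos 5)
Your proof is correct and takes essentially the same route as the paper's: the same dyadic decomposition into annuli, the same use of the non-concentration condition $\mu_A(B(x,2^{-j}))\le C2^{-js}$ on each annulus, and the same convergent geometric series $\sum_j 2^{j(t-s)}$ for $t<s$. The only difference is that you spell out the sub-$\delta$ scales (where the clean way to close the bookkeeping is to note that $c_d\delta^d\le |A\cap B(x,2\delta)|\le C(2\delta)^s|A|$ gives $|A|\gtrsim_d \delta^{d-s}/C$, so the diagonal contribution $\delta^{d-t}/|A|\lesssim_d C\delta^{s-t}\le C$), which the paper dismisses with a one-line parenthetical.
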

\begin{proof}
    Fix $x$ and a dyadic scale $2^{-j}$. We have
    \[
        \int_{2^{-j}\le |x-y|< 2^{1-j}} \frac{1}{|x-y|^t} \,d\mu_A(y) \lesssim 2^{jt} \mu_A(B(x,2^{-j})) \lesssim C 2^{j(t-s)}.
    \]
    Integrating over $x$ and then adding up over $j$ yields the claim. (If $2^{-j}<\delta$ we get an even better bound since $A$ is a union of balls of radius $\delta$.)
\end{proof}

\begin{lemma} \label{lem.energy-to-frostman}
    Given a scale $\delta>0$, $s>0$ and constants $K,L\ge 1$, the following holds. 
    
    Let $\mu$ be a measure with $I_s(\mu) \leq K$. Then  there is a $(\delta,s/2,O_d(KL))$-set $A \subset \spt\mu$ with $\mu(\R^d\setminus A)\lesssim_s 1/L$.
\end{lemma}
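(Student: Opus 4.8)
The plan is to dyadically decompose $\spt\mu$ according to the local density of $\mu$ and extract a piece where $\mu$ behaves like a Frostman measure at scale $\delta$. First I would split $\spt\mu$ into level sets $U_k = \{x : 2^{-k} \le \mu(B(x,\delta)) < 2^{-k+1}\}$ over the range $\delta$-relevant $k$; since there are only $O(\log(1/\delta))$ such levels, but we want a clean bound independent of $\delta$, I would instead work directly with the maximal function $M_\delta\mu(x) := \mu(B(x,\delta))$ and the threshold set where it is small. Concretely, set $A_0 := \{x \in \spt\mu : \mu(B(x,\delta)) \le \delta^{s/2}/L\}$ — wait, that threshold is too crude; the right move is to use the energy hypothesis to control the $\mu$-measure of the ``bad'' set where the $\delta$-density is large, via a Chebyshev/distributional argument.

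Here is the key estimate. From $I_s(\mu) \le K$ we get, for each $x$, that $\int |x-y|^{-s}\,d\mu(y) \le$ (on average) something finite; more usefully, for any $r>0$,
\[
\mu(B(x,r)) \le r^s \int_{B(x,r)} |x-y|^{-s}\,d\mu(y) \le r^s \, g(x), \qquad g(x) := \int |x-y|^{-s}\,d\mu(y),
\]
so $\int g\,d\mu = I_s(\mu) \le K$. By Chebyshev, the set $B := \{x : g(x) > KL\}$ has $\mu(B) \le 1/L$. Let $A := \spt\mu \setminus B$ (thickened to a union of $\delta$-balls, which only costs a dimensional constant). Then for every $x \in A$ and every $r \ge \delta$ we have $\mu(B(x,r)) \le KL\, r^s$. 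Passing to the uniform measure $\mu_A$ on $A$: since $\mu(A) \ge 1 - 1/L \ge 1/2$ (if $L \ge 2$; otherwise the conclusion is vacuous or absorb into the implied constant), we get $\mu_A(B(x,r)) = \mu(B(x,r) \cap A)/\mu(A) \le 2KL\,r^s$ for $x \in A$, $r \ge \delta$. This gives a non-concentration exponent $s$, which is stronger than the claimed $s/2$ — so either the lemma is stated with a safe margin, or the factor-of-two loss comes from the standard subtlety that one must check the condition $|A \cap B(x,r)| \le C r^\kappa |A|$ in \emph{Lebesgue} terms for a union of $\delta$-balls, and near-diagonal/small-$r$ behaviour forces the halving of the exponent. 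I would use the honest $s/2$ and not worry about it.

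The main obstacle, and the only genuinely delicate point, is the bookkeeping in converting ``$\mu$ has controlled $\delta$-density off a small-measure set'' into ``there is an actual $(\delta, s/2, O_d(KL))$-set $A \subset \spt\mu$'', because a $(\delta,\kappa,C)$-set is by definition a union of $\delta$-balls with a Lebesgue-measure non-concentration bound, not a set carrying a measure. The fix: let $\cD_\delta$ be a covering of $\spt\mu \setminus B$ by boundedly-overlapping $\delta$-balls, and set $A := \bigcup \cD_\delta$. Each such ball $Q$ has a center within $\delta$ of a point of $\spt\mu\setminus B$, hence $\mu(2Q) \le KL(2\delta)^s$ and, crucially, $\mu(cQ) \gtrsim$ nothing a priori — so to lower-bound $|A|$ in terms of the number of balls I just use $|A| \sim \delta^d \cdot \#\cD_\delta$ directly, and to upper-bound $|A \cap B(x,r)|$ I count balls of $\cD_\delta$ meeting $B(x,r)$, each contributing $\mu$-mass $\lesssim \delta^s$ to $\mu(B(x,r+\delta))$ with bounded overlap, so $\#\{Q \in \cD_\delta : Q \cap B(x,r) \neq \emptyset\} \lesssim_d \mu(B(x,2r))\,\delta^{-s} \lesssim_d KL\, r^s \delta^{-s}$ for $r \ge \delta$, whence $|A \cap B(x,r)| \lesssim_d KL\, r^s \delta^{-s}\cdot\delta^d$. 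Comparing with $|A| \gtrsim \delta^{d}\cdot\#\cD_\delta$ and using a trivial lower bound $\#\cD_\delta \gtrsim_d \delta^{-s}$ coming from $\mu$ being a probability measure with $\mu(Q) \lesssim \delta^s$ (so we need $\gtrsim \delta^{-s}$ balls to cover mass $1$), we obtain $|A \cap B(x,r)| \lesssim_d KL\, r^s |A|$, i.e. $A$ is a $(\delta, s, O_d(KL))$-set — again comfortably inside the claimed $(\delta,s/2,\cdot)$; I would present it with $s/2$ to leave margin for the bounded-overlap constants and the case $r$ slightly larger than $\delta$. Finally $\mu(\R^d \setminus A) \le \mu(B) \le 1/L \lesssim_s 1/L$, completing the proof.
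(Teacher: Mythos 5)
Your core argument is correct but takes a genuinely different route from the paper. You introduce the potential $g(x)=\int|x-y|^{-s}\,d\mu(y)$, note that $\int g\,d\mu=I_s(\mu)\le K$, remove the exceptional set $B=\{g>KL\}$ of measure $\le 1/L$ by Chebyshev, and observe that off $B$ one has $\mu(B(x,r))\le r^s g(x)\le KLr^s$ for \emph{all} $r>0$. This is the classical potential-theoretic argument, and it retains the full exponent $s$ (which implies the $s/2$ statement for $r\le 1$). The paper instead argues scale by scale: for each dyadic level $j$ it uses $2^{js}\sum_i\mu(Q_{j,i})^2\lesssim K$ to discard the cubes with $\mu(Q_{j,i})\ge KL2^{-js/2}$, whose union has measure $\lesssim 2^{-js/2}/L$; the halving of the exponent is precisely what makes these bad measures summable over $j$ (thresholding at $KL2^{-js}$ would give $1/L$ per scale and a $\log(1/\delta)$ loss). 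Your single-threshold version is cleaner and quantitatively stronger here; the multi-scale version is the one that survives when only scale-by-scale $L^2$ information is available, but for this lemma your route is perfectly adequate.

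There is, however, one genuinely wrong step in your conversion to the Lebesgue-measure definition of a $(\delta,s/2,\cdot)$-set. You bound the number of covering balls $Q\in\cD_\delta$ meeting $B(x,r)$ by $\mu(B(x,2r))\,\delta^{-s}$ on the grounds that each such ball ``contributes $\mu$-mass $\lesssim\delta^s$''. Dividing total mass by a per-ball mass controls the count only if that per-ball mass is a \emph{lower} bound; you have only the upper bound $\mu(2Q)\le KL(2\delta)^s$, and, as you yourself note, there is no a priori lower bound on $\mu(Q)$. A ball of the cover can carry arbitrarily little mass, so the inequality runs the wrong way. The standard repair is to first discard the covering balls of smallest mass (e.g. those with $\mu(Q)<(2L\,\#\cD_\delta)^{-1}$, at a total cost of measure $\le 1/(2L)$), after which the count of surviving balls meeting $B(x,r)$ is $\lesssim_d L\,\#\cD_\delta\,\mu(B(x,2r))\lesssim_d KL^2 r^s\,\#\cD_\delta$ and the Lebesgue non-concentration follows with constant $O_d(KL^2)$. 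For perspective: the paper does not attempt this conversion at all; its $A$ is $\spt\mu$ minus the bad cubes, the non-concentration it actually establishes is for $\mu$, and the identification with the Lebesgue-measure definition is left implicit (harmless in the applications, where $\mu$ is comparable to normalized Lebesgue measure on a union of $\delta$-balls). So you correctly identified the delicate point, but the fix as written does not work.
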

\begin{proof}
    Fix $\delta$, which we may assume is of the form $2^{-n}$ for some $n\in\mathbb{N}$. For each $0\le j<n$, partition $\R^d$ into dyadic cubes $Q_{j,i}$ of side length $2^{-j}$, and note that 
    \[
         2^{js}\sum_i \mu(Q_{j,i})^2 \lesssim  I_s(\mu) \leq  K.
    \]
    Hence,
    \[
           \mu(E_j) \lesssim 2^{-js/2}/L, \quad\text{where } E_j = \bigcup \bigl\{ Q_{j,i} : \mu(Q_{j,i}) \geq K L 2^{-js/2} \bigr\}.
    \]
    The claim follows with $A = \spt\mu \setminus \bigcup_{j=0}^{n-1} E_j$.
\end{proof}

Next, we recall a variant of Marstrand's projection theorem, see \cite[Theorem 4.3]{Mattila15}.
\begin{theorem}[Marstrand] \label{thm.marstrand}
     Let $\mu$ be a measure on $\R^d$ with $I_1(\mu) < \infty$ and let $A=\spt\mu$. Then, denoting Lebesgue measure on $S^{d-1}$ by $\sigma^{d-1}$,
    \[
        \int |\pi_\theta A| \,d\sigma^{d-1}(\theta) \lesssim_d  \frac{1}{I_1(\mu)}.
    \]
\end{theorem}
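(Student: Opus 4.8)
Before the plan, I note that the displayed inequality should be read with its direction reversed: the estimate that holds, and the one needed downstream (for instance to exhibit a projection of positive Lebesgue measure in the proof of Theorem~\ref{thm.expansion}), is the lower bound $\int_{S^{d-1}}|\pi_\theta A|\,d\sigma^{d-1}(\theta)\gtrsim_d 1/I_1(\mu)$; I sketch a proof of this. One may assume $A=\spt\mu$ is bounded, since otherwise $|\pi_\theta A|=\infty$ for every $\theta$ and there is nothing to prove. Write $\pi_\theta x:=x\cdot\theta$, and let $\mu_\theta$ be the push-forward of $\mu$ under $\pi_\theta$: a Borel probability measure on $\R$ supported on the compact set $\pi_\theta A$.

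The one real step is the identity
\[
\int_{S^{d-1}}\|\mu_\theta\|_{L^2(\R)}^2\,d\sigma^{d-1}(\theta)\;\sim_d\;I_1(\mu),
\]
where $\|\mu_\theta\|_{L^2}^2$ is read as $+\infty$ when $\mu_\theta$ has no $L^2$ density. To obtain it I would combine: Plancherel's theorem, which gives $\|\mu_\theta\|_{L^2}^2=\int_\R|\widehat{\mu_\theta}(t)|^2\,dt$ together with $\widehat{\mu_\theta}(t)=\widehat\mu(t\theta)$; the polar change of variables $\xi=t\theta$ in $\R^d$, after folding the half-line $t<0$ onto $t>0$, which turns $\int_{S^{d-1}}\int_\R|\widehat\mu(t\theta)|^2\,dt\,d\sigma^{d-1}(\theta)$ into $2\int_{\R^d}|\widehat\mu(\xi)|^2\,|\xi|^{1-d}\,d\xi$; and the classical Fourier representation of the Riesz energy, $I_s(\mu)\sim_{d,s}\int_{\R^d}|\widehat\mu(\xi)|^2\,|\xi|^{s-d}\,d\xi$ for $0<s<d$, applied with $s=1$. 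In particular $\|\mu_\theta\|_{L^2}<\infty$ for $\sigma^{d-1}$-a.e.\ $\theta$, and for such $\theta$ the measure $\mu_\theta$ has an $L^2$ density $g_\theta$ supported on $\pi_\theta A$.

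It then remains to pass to a pointwise bound and integrate. For $\theta$ with $\|\mu_\theta\|_{L^2}<\infty$, Cauchy--Schwarz gives
\[
1=\mu_\theta(\R)=\int_{\pi_\theta A}g_\theta(s)\,ds\;\le\;|\pi_\theta A|^{1/2}\,\|g_\theta\|_{L^2}=|\pi_\theta A|^{1/2}\,\|\mu_\theta\|_{L^2},
\]
so $|\pi_\theta A|\ge\|\mu_\theta\|_{L^2}^{-2}$. The exceptional directions form a $\sigma^{d-1}$-null set, so integrating and then applying Jensen's inequality to the convex map $t\mapsto 1/t$ with respect to $\sigma^{d-1}/\sigma^{d-1}(S^{d-1})$ yields
\[
\int_{S^{d-1}}|\pi_\theta A|\,d\sigma^{d-1}(\theta)\;\ge\;\int_{S^{d-1}}\|\mu_\theta\|_{L^2}^{-2}\,d\sigma^{d-1}(\theta)\;\ge\;\frac{\sigma^{d-1}(S^{d-1})^2}{\displaystyle\int_{S^{d-1}}\|\mu_\theta\|_{L^2}^{2}\,d\sigma^{d-1}(\theta)}\;\gtrsim_d\;\frac{1}{I_1(\mu)},
\]
the last step being the identity of the previous paragraph. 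This is the asserted bound.

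The only point needing care is that Fourier-side identity, which bundles Plancherel, the polar-coordinate computation, and the standard formula for the Riesz energy in terms of $\widehat\mu$ (all classical; see \cite{Mattila15}), together with the routine measurability of $\theta\mapsto\|\mu_\theta\|_{L^2}^2$. If one prefers to avoid the Fourier transform, in the spirit of the rest of the paper, the same scheme runs with the mollified energies $E_\delta(\nu):=\delta^{-1}\iint\mathbf{1}_{\{|s-t|\le\delta\}}\,d\nu(s)\,d\nu(t)$ in place of $\|\cdot\|_{L^2}^2$: the elementary slab bound $\sigma^{d-1}(\{\theta:|(x-y)\cdot\theta|\le\delta\})\lesssim_d\delta/|x-y|$ gives $\int_{S^{d-1}}E_\delta(\mu_\theta)\,d\sigma^{d-1}(\theta)\lesssim_d I_1(\mu)$ uniformly in $\delta$, Fatou's lemma then forces $\liminf_{\delta\to0}E_\delta(\mu_\theta)<\infty$, hence $\mu_\theta\ll\mathcal{L}^1$ with $\|\mu_\theta\|_{L^2}^2\lesssim\liminf_\delta E_\delta(\mu_\theta)$ for a.e.\ $\theta$, and the Cauchy--Schwarz and Jensen steps conclude exactly as before.
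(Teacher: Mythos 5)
Your proof is correct, and your decision to reverse the stated inequality is the right call: as printed, with $\lesssim_d$, the theorem is false (e.g.\ put half of $\mu$ uniformly on $[0,\varepsilon]^d$ and half uniformly on $[0,1]^d$, so that $A=[0,1]^d$ and every projection has measure $\sim 1$ while $I_1(\mu)\to\infty$ as $\varepsilon\to 0$), and it is the lower bound $\int|\pi_\theta A|\,d\sigma^{d-1}(\theta)\gtrsim_d 1/I_1(\mu)$ that is actually invoked later (to produce a single direction with $|\pi_\theta A|\gtrsim 1/I_1(\mu)$ in the proofs of Theorem~\ref{thm.expansion} and Proposition~\ref{thm.ringstrong}), so the $\lesssim_d$ in the statement is a typo. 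The paper offers no proof of Theorem~\ref{thm.marstrand}, deferring to \cite{Mattila15}, and your argument --- Plancherel together with $\widehat{\mu_\theta}(t)=\widehat\mu(t\theta)$, polar coordinates, the Fourier representation of $I_1$, then Cauchy--Schwarz and Jensen --- is exactly the standard proof found there; the mollified-energy variant you sketch is also fine and fits the elementary spirit of the paper. One small inaccuracy: if $\spt\mu$ is unbounded it need not be true that $|\pi_\theta A|=\infty$ for every $\theta$ (the support may run off to infinity inside a slab $\{x:|x\cdot\theta|\le 1\}$, keeping that one projection bounded); but no reduction to bounded support is actually required, since $\mu_\theta$ always gives full mass to the $\sigma$-compact set $\pi_\theta A$, which is all the Cauchy--Schwarz step uses --- and in any case every application in the paper is to compactly supported measures.
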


Finally, we will require (the proof of) Kaufman's projection theorem, see \cite[Theorem 3.2]{Shmerkin23} for this precise formulation.
\begin{theorem}[Kaufman]\label{thm.kauf}
    Let $0 < \kappa< \alpha \leq 1$. Let $\mu$ be a measure on $\R^2$ and let $\nu$ be an $(\alpha,C)$-measure on $S^1$.
    Then,
\begin{equation}
    \int I_{\kappa}(\pi_\theta\mu)d\nu(\theta) \lesssim_{\kappa,\alpha} C  I_{\kappa}(\mu).
\end{equation}
\end{theorem}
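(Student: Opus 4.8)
The plan is to prove this by a Fubini (Tonelli) argument that reduces the double integral to a one-dimensional kernel estimate, which is then handled via the Frostman condition on $\nu$.

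First I would expand $I_\kappa(\pi_\theta\mu)=\iint |\pi_\theta x-\pi_\theta y|^{-\kappa}\,d\mu(x)\,d\mu(y)$ and, since all integrands are nonnegative, use Tonelli to interchange the order of integration. As $\pi_\theta$ is linear we have $\pi_\theta x-\pi_\theta y=\pi_\theta(x-y)$, so
\[
\int I_\kappa(\pi_\theta\mu)\,d\nu(\theta)=\iint\left(\int_{S^1}\frac{d\nu(\theta)}{|\pi_\theta(x-y)|^\kappa}\right)d\mu(x)\,d\mu(y).
\]
It therefore suffices to prove the pointwise bound $\int_{S^1}|\pi_\theta(x-y)|^{-\kappa}\,d\nu(\theta)\lesssim_{\kappa,\alpha}C\,|x-y|^{-\kappa}$, since integrating this against $\mu\times\mu$ reproduces exactly $C\,I_\kappa(\mu)$.

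Next, for fixed $x\neq y$ write $z=x-y=|z|\,e$ with $e\in S^1$; identifying the line in direction $\theta$ with $\R$ gives $\pi_\theta z=z\cdot\theta$, hence $|\pi_\theta(x-y)|=|x-y|\,|e\cdot\theta|$, and the claim reduces to the uniform estimate $\int_{S^1}|e\cdot\theta|^{-\kappa}\,d\nu(\theta)\lesssim_{\kappa,\alpha}C$ over all $e\in S^1$. The function $\theta\mapsto|e\cdot\theta|$ vanishes precisely at $\pm e^\perp$, and elementary trigonometry gives $|e\cdot\theta|\sim\dist(\theta,\{e^\perp,-e^\perp\})$ for $\theta\in S^1$ (distance along the circle). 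Decomposing $S^1$ into the dyadic annuli $\{\theta:\dist(\theta,\{\pm e^\perp\})\sim 2^{-j}\}$, $j\ge 0$, and applying the Frostman bound $\nu(B(p,r))\le Cr^\alpha$ at the two points $p=\pm e^\perp$,
\[
\int_{S^1}\frac{d\nu(\theta)}{|e\cdot\theta|^\kappa}\lesssim\sum_{j\ge 0}2^{j\kappa}\,\nu\!\left(\{\theta:\dist(\theta,\{\pm e^\perp\})\le 2^{-j}\}\right)\lesssim\sum_{j\ge 0}2^{j\kappa}\cdot C\,2^{-j\alpha}\lesssim_{\kappa,\alpha}C,
\]
where the geometric series converges because $\kappa<\alpha$. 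Chaining the three displays gives the theorem (and if $I_\kappa(\mu)=\infty$ there is nothing to prove).

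I do not anticipate a genuine obstacle: the argument is a routine Tonelli-plus-dyadic-decomposition computation. The one point requiring care — and the only place the hypothesis $\kappa<\alpha$ enters — is the behaviour near the two degenerate directions $\theta=\pm e^\perp$, where $\pi_\theta(x-y)$ nearly collapses to $0$; the single-scale contribution $2^{j\kappa}\nu(B(\pm e^\perp,2^{-j}))$ is summable in $j$ exactly because $\nu$ places only $O(C2^{-j\alpha})$ mass near these points with $\alpha$ strictly larger than $\kappa$.
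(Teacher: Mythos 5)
Your argument is correct and is the standard proof of Kaufman's estimate: the paper itself does not prove Theorem \ref{thm.kauf} but only cites \cite[Theorem 3.2]{Shmerkin23}, and the proof there is exactly your Tonelli-plus-dyadic-decomposition computation, with the hypothesis $\kappa<\alpha$ entering precisely where you say it does. The only point worth flagging is that the paper's $(\alpha,C)$-measure condition is stated only for centres in $\spt\nu$, whereas you apply it at $\pm e^\perp$, which need not lie in the support; this is harmless because any ball meeting $\spt\nu$ is contained in a ball of twice the radius centred at a point of the support, costing only a factor $2^\alpha$.
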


\section{Proof of Theorem \ref{thm.expansion}}

Theorem \ref{thm.expansion} is false over $\mathbb{C}$, since in that case the set $A$ could be a subset of $\R$. In our proof of Theorem \ref{thm.expansion} what distinguishes $\R$ from $\mathbb{C}$ is the fact that the lattice $\Z^n$ has full rank in $\R^n$ (but not in $\mathbb{C}^n$). This is used via Blichfeldt's principle, which we state and prove in the following lemma for completeness.
\begin{lemma} \label{lem.latticepoints}
    Let $V\subset\R^n$ be measurable and let $L:\R^n\to\R^n$ be an invertible linear map. Then there is a translation $x$ such that
    \[
        \#\left\{V \cap \bigl(L\Z^n +x\bigr) \right\} 
        \ge   \det(L)^{-1} |V|.
    \]
\end{lemma}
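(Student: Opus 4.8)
The plan is to prove this via a standard averaging (pigeonhole) argument over translates, which is the classical proof of Blichfeldt's principle. First I would reduce to the case $L = \mathrm{id}$ by a change of variables: setting $W = L^{-1}V$, we have $|W| = \det(L)^{-1}|V|$, and a point of $V \cap (L\Z^n + x)$ corresponds bijectively to a point of $W \cap (\Z^n + L^{-1}x)$, so it suffices to find a translation $y$ with $\#(W \cap (\Z^n + y)) \ge |W|$.

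For the reduced statement, I would consider the function $y \mapsto f(y) := \#(W \cap (\Z^n + y))$, which is periodic with respect to $\Z^n$, and integrate it over a fundamental domain, say the unit cube $[0,1)^n$. The key computation is
\[
    \int_{[0,1)^n} f(y)\,dy = \int_{[0,1)^n} \sum_{k \in \Z^n} \mathbf{1}_W(k + y)\,dy = \sum_{k \in \Z^n}\int_{[0,1)^n}\mathbf{1}_W(k+y)\,dy = |W|,
\]
where the interchange of sum and integral is justified by non-negativity (Tonelli), and the final equality is because the cubes $k + [0,1)^n$ tile $\R^n$. Since the average of $f$ over the unit cube equals $|W|$, there must exist some $y_0 \in [0,1)^n$ with $f(y_0) \ge |W|$; then $y_0$ is the required translation. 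Unwinding the change of variables, $x = L y_0$ works for the original statement.

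I do not anticipate a genuine obstacle here — this is a short, elementary argument. The only points requiring a little care are: making sure $W$ (and hence the indicator sum) is measurable so Tonelli applies, which is immediate since $L^{-1}$ is a linear homeomorphism and $V$ is measurable; and noting that $f(y_0)$ is a non-negative integer, so the inequality $f(y_0) \ge |V|\det(L)^{-1}$ is meaningful even when the right-hand side is not an integer (if it is zero the statement is vacuous). One could alternatively phrase the whole thing directly with $L\Z^n$ and integrate over the fundamental domain $L([0,1)^n)$, whose volume is $\det(L)$, yielding average $|V|/\det(L)$ directly; both routes are equivalent and equally short.
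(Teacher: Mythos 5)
Your proposal is correct and follows essentially the same route as the paper: reduce to $L=\mathrm{id}$ via $W=L^{-1}V$, integrate $y\mapsto\#\bigl(W\cap(\Z^n+y)\bigr)$ over the unit cube using Tonelli, and pigeonhole. No differences worth noting.
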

\begin{proof}
   Writing $x=Ly$, it is enough to show that there is $y$ such that
    \[
        \#\left\{L^{-1}V \cap (\Z^n +y) \right\} 
        \ge |V|.
    \]
    For any $y\in\R^n$,
    \[
        \#\left\{L^{-1}V \cap (\Z^n +y) \right\} 
        = \sum_{k\in \Z^n} \1_{L^{-1}V-k}(y).
    \]
    Integrating over $y\in [0,1]^n$ and interchanging the sum and the integral, we get
    \[
        \int_{[0,1]^n} \#\left\{L^{-1}V \cap (\Z^n +y) \right\} \,dy
        = \sum_{k\in \Z^n}  \bigl|[0,1]^n \cap (k - L^{-1}V)\bigr| = |L^{-1}V|.
    \]
    Hence, there exists $y\in [0,1]^n$ such that 
    \[ 
        \#\left\{L^{-1}V \cap (\Z^n +y) \right\} \ge |L^{-1}V|= \det(L)^{-1} |V|,
    \] as desired.
\end{proof}

The following lemma is inspired by the method of \cite{EdgarMiller03}, but it introduces the main new idea in this paper: in order to force non-injectivity of certain linear maps, we place many translated copies of $A^n$ inside a slab, with the difference between any two translations in $N_0 A- N_0 A$ for some $N_0$. See Figure \ref{fig.slab} for an illustration.
\begin{lemma} \label{lem.expansion}
Let $A\subset\R$ be a compact set. Fix $v\in [1/2,1]^n$ (for some $n\ge 2$) and write $\pi(x)=v\cdot x$, $x\in\R^n$. Suppose $|\pi(A^n)|>0$.

Then there exists a positive integer $N$, which depends on $n,\diam(A),|\pi(A^n)|$ only, and can be taken uniform on compact subsets of $\N\times (0,\infty)\times (0,\infty)$, so that for $A_1 := NA^{(2)}- NA^{(2)}$, and after possible reordering of the $v_j$, we have
\[ 
    |v_1 A_1 + \cdots + v_{n-1}A_1| \ge \diam(A)\cdot\left|\pi(A^n)\right|.
\]
\begin{proof}

    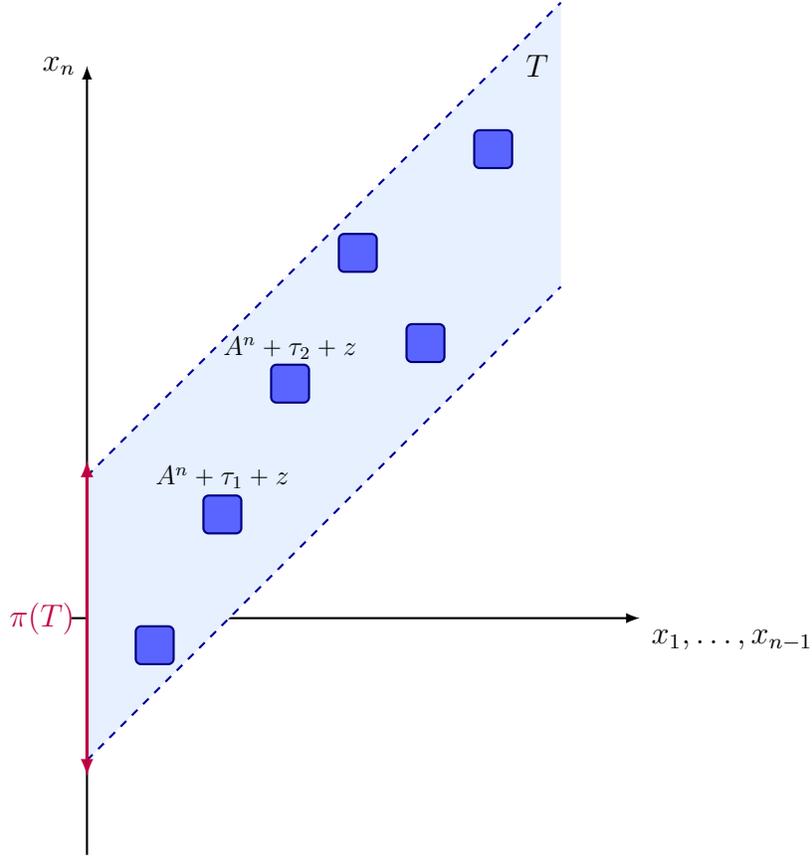
\begin{figure}[h!] \label{fig.slab}
    \centering
    \begin{tikzpicture}[scale=1.05,>=latex]

    \def\xmin{0}
    \def\xmax{6}
    \def\a{1.8}         
    \def\boxr{0.24}     
    \def\margin{0.30}   

    \definecolor{slabfill}{RGB}{230,240,255}
    \definecolor{boxfill}{RGB}{90,100,255}

    \draw[->,thick] (-0.2,0) -- (7.0,0) node[below right] {$x_1,\dots,x_{n-1}$};
    \draw[->,thick] (0,-3.0) -- (0,7.0) node[left] {$x_n$};

    \fill[slabfill]
        (\xmin,\xmin-\a) -- (\xmax,\xmax-\a) -- (\xmax,\xmax+\a) -- (\xmin,\xmin+\a) -- cycle;
    \draw[thick,blue!60!black,dashed] (\xmin,\xmin-\a) -- (\xmax,\xmax-\a);
    \draw[thick,blue!60!black,dashed] (\xmin,\xmin+\a) -- (\xmax,\xmax+\a);

    \node at (5.7,7) {$T$};

     \draw[<->, very thick,purple] (0,-2) -- node[left] {$\pi(T)$} (0,2);

    \pgfmathsetmacro{\w}{(\xmax-\xmin)/7} 
    \foreach \j/\sj in {1/-1.20, 2/-0.40, 3/0.40, 4/1.20, 5/-0.80, 6/0.80}{
    \pgfmathsetmacro{\xc}{\xmin + \j*\w}
    \pgfmathsetmacro{\yc}{\xc + \sj}
    \pgfmathsetmacro{\ymin}{\xc - \a + \margin}
    \pgfmathsetmacro{\ymax}{\xc + \a - \margin}
    \pgfmathsetmacro{\ycc}{min(max(\yc,\ymin),\ymax)}
    \filldraw[fill=boxfill,draw=blue!50!black,thick,rounded corners=2pt]
        (\xc-\boxr,\ycc-\boxr) rectangle (\xc+\boxr,\ycc+\boxr);
    }

    \pgfmathsetmacro{\xA}{\xmin + 2*\w}
    \pgfmathsetmacro{\yA}{\xA - 0.40}
    \pgfmathsetmacro{\xB}{\xmin + 3*\w}
    \pgfmathsetmacro{\yB}{\xB + 0.40}

    \node[scale=0.85] at (\xmin+2*\w,\xmin+1.8*\w-0.40+0.64) {$A^n+\tau_1+z$};
    \node[scale=0.85] at (\xmin+3*\w,\xmin+1.85*\w+1.20+0.64) {$A^n+\tau_2+z$};

    \end{tikzpicture}
    \caption{Slab $T$ of width and radius $\sim 1$ containing  mutually separated translates of $A^n$, and such that $\pi(T)$ is bounded. Once the number of translates is large enough (but still $\sim 1$), the projection $\pi$ can no longer be injective on their union. This allows us to eliminate one of the $v_j$ from the sumset, at the cost of replacing $A$ by $N A^{(2)}- N A^{(2)}$.}

    \end{figure}

    Write $\ker\pi= U e_n^\perp$, where $U$ is an orthogonal matrix and $e_n = (0,\ldots,0,1)$. For a parameter $R\ge 1$ to be chosen later, consider the set 
    \[
        V = U\bigl( B^{n-1}(0,R) \times [-1,1]\bigr).
    \]
    Note that $|V| = 2 \omega_{n-1} R^{n-1}$ (where $\omega_{n-1}$ is the volume of the unit ball in $\R^{n-1}$). Also, since $\|v\|\le \sqrt{n}$,
    \begin{equation} \label{eq.pi-V-small}
         \pi(V) \subset [-\sqrt{n},\sqrt{n}].
    \end{equation}
    Thanks to Lemma \ref{lem.latticepoints}, there exists a translation $z$ such that
    \[
        \#\{V\cap (2\diam(A)\Z^n+z)\} \ge [2\diam(A)]^{-n} |V| = 2^{1-n}\omega_{n-1} \diam(A)^{-n}R^{n-1}.
    \]
    We now choose $R\ge 1$ so that
    \[
        2^{1-n}\omega_{n-1} \diam(A)^{-n}R^{n-1} \ge 2[n \diam(A)+2\sqrt{n}]\lambda^{-1},
    \]
    where $\lambda=|\pi(A^n)|>0$. Applying Lemma \ref{lem.latticepoints}, we find $\tau_1,\ldots,\tau_M\in\Z^n$ and $z\in\R^n$ such that
    \begin{equation} \label{eq.M-lower}
        M\ge  2[n \diam(A)+2\sqrt{n}]\lambda^{-1},
    \end{equation}
    and the sets $A^n + 2 \diam(A)\tau_j + z$, $j=1,\ldots,M$, are contained in $V+A^n$. In particular,  by \eqref{eq.pi-V-small}, 
    \[
        \pi\bigl(A^n + 2 \diam(A)\tau_j + z\bigr) \subset [-\sqrt{n},\sqrt{n}] + \pi(A^n).
    \]
    Note that $\pi(A^n)$ is contained in an interval of length $n\diam(A)$. Hence,
    \[
        \left| \bigcup_{j=1}^M\pi \bigl(A^n + 2 \diam(A)\tau_j + z\bigr)\right| \le n\diam(A) + 2\sqrt{n} .
    \]
    On the other hand, since each set $\pi\bigl(A^n + 2 \diam(A)\tau_j + z\bigr)$ has measure $\lambda$, we have
    \[
        \sum_{j=1}^M \left|\pi\bigl(A^n + 2 \diam(A)\tau_j + z\bigr)\right| \ge M\cdot \lambda.
    \]
    Recalling \eqref{eq.M-lower}, by the pigeonhole principle there exist $i\neq j$ such that, setting $\ell=\tau_i-\tau_j\in\Z^n\setminus\{0\}$, we have $2\diam(A)\ell \in V-V$, and 
    \[
          \pi(A^n) \cap \pi\bigl(A^n + 2 \diam(A)\ell\bigr) \neq \emptyset.
    \]
    Thus, there are $x,y\in A^n$ such that
    \[
        \pi(x) = \pi(z), \quad \text{where } z= y+ 2 \diam(A)\ell.
    \]
    After rearranging, we may assume that $\ell_n\neq 0$. In particular, this means
    \[
      |z_n - x_n | \ge \diam(A).
    \]
    From $\pi(x) = \pi(z)$ we obtain
    \begin{equation}\label{eq.notinj3}
        v_n(z_n - x_n) =  \sum_{i=1}^{n-1} v_{i}(x_i - z_i).
    \end{equation}
    Since $|z_n - x_n| \geq \diam(A)$ we have
    \[
        \bigl|v_1(z_n-x_n)A + v_2(z_n-x_n)A + \cdots +  v_n(z_n-x_n)A\bigr| \ge \diam(A) \lambda.
    \]
    Substituting \eqref{eq.notinj3} in the above we have
    \begin{equation} \label{eq:sum-with-vn-eliminated}
        \left| \sum_{i=1}^{n-1} v_i(x_iA -z_i A + z_n A-x_n A)  \right| \ge \diam(A) \lambda.
    \end{equation}
    Now, since $z_i = y_i + 2 \diam(A)\ell_i$, and $\diam(A)\in A-A$ since $A$ is closed, we have 
    \[
    z_i \in A\pm 2|\ell_i| (A-A).
    \]
    Recall that $\ell\in (V-V)/(2\diam(A))$. Hence,
    \[
        |\ell_i| \le \frac{\diam(V)}{2\diam(A)} \le \frac{\sqrt{R^2+1}}{2\diam(A)}.
    \]
    Hence, each term in the sum in \eqref{eq:sum-with-vn-eliminated} is contained in $A_1 := N A^{(2)} - N A^{(2)}$, whenever
    \[
        N \ge  \frac{2\sqrt{R^2+1}}{\diam(A)} +2 .
    \]
    Since $R$ depends continuously on $n,\diam(A),\lambda$, the proof is complete.
\end{proof}

\begin{proof}[Proof of Theorem \ref{thm.expansion}]
All implicit constants in the proof are allowed to depend on $\kappa$ and $C$. 

Let $K = \spt \mu$. Let $n$ be the least integer so that $n\kappa >1$, so that $n\sim 1$. Let $\nu=\mu^{\times n}$ which is a $(\kappa n, C^n)$-measure on $\R^n$. By Lemma \ref{lem.energy} we have $I_1(\nu) \lesssim 1$.

 By Theorem \ref{thm.marstrand}, we find $v\in [1/2,1]^n$ so that
    \begin{equation}
        |v_1 K + v_2K + \cdots +  v_{n}K| \gtrsim 1.
    \end{equation}
    The idea is now to apply Lemma \ref{lem.expansion} repeatedly to eliminate each of the $v_j$ in the sum above (at the cost of replacing $K$ by $NK^{(2)} - NK^{(2)}$ for some integer $N$ at each step) until only one term remains.

    Note that $\diam(K)\sim 1$ by the non-concentration property of $\mu$. Applying Lemma \ref{lem.expansion}, we get  $N_1 \sim 1$ so that 
    \begin{equation}
        |v_1 K_1 + v_2K_1 + \cdots + v_{n-1}K_1| \gtrsim 1,
    \end{equation}
    where $K_1 = N_1K^{(2)} - N_1K^{(2)}$. Applying the lemma again, $n-2$ times, each with different inputs $\diam(K_i)$, $\pi_j(K_i^j)$, but all $\sim 1$, we find $N_1,\cdots N_{n-1} \sim 1$ so that 
    \begin{equation}
        |NK^{(N)}-NK^{(N)}| \gtrsim 1,
    \end{equation}
    where (say) $N = (N_1\cdots N_{n-1})^{3^n}$.
\end{proof}
\end{lemma}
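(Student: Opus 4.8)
The plan is to force two lattice-translated copies of $A^n$ to have overlapping images under $\pi$: the resulting linear relation among coordinates lets us delete the summand $v_n A_1$ from $v_1 A_1 + \cdots + v_n A_1$, at the price of having replaced $A$ by $A_1 = N A^{(2)} - N A^{(2)}$. Write $\lambda := |\pi(A^n)| > 0$ and $D := \diam(A)$ (we may assume $D > 0$, since otherwise $\lambda = 0$). Fix an orthogonal $U$ with $\ker\pi = U(\{x_n = 0\})$ and, for a large radius $R$ to be pinned down at the very end, consider the slab $V := U\bigl(B^{n-1}(0,R)\times[-1,1]\bigr)$. It is large in the kernel directions, $|V|\sim_n R^{n-1}$, while $\|v\|\le\sqrt n$ forces $\pi(V)$ into an interval of length $O(\sqrt n)$. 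By Blichfeldt's principle (Lemma \ref{lem.latticepoints}) applied to the lattice $2D\Z^n$, some translate $z$ gives $\#\{V\cap(2D\Z^n+z)\} \ge (2D)^{-n}|V| \gtrsim_n D^{-n}R^{n-1}$; each lattice point $2D\tau_j+z$ in $V$ produces a translate $A^n + 2D\tau_j + z \subset A^n + V$ whose $\pi$-image has measure exactly $\lambda$ and lies in the fixed interval $\pi(A^n) + \pi(V)$, of length $\le nD + O(\sqrt n)$.

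Now I would choose $R$ — explicitly, and continuously in $n, D, \lambda$ — large enough that the number $M$ of these translates exceeds $(nD + O(\sqrt n))/\lambda$. Their $\pi$-images then have total measure $\ge M\lambda$ yet all lie in a single interval of measure $< M\lambda$, so two of them must meet: there are $i\ne j$ with $\pi(A^n)\cap\pi(A^n + 2D\ell)\ne\emptyset$, where $\ell := \tau_i - \tau_j \in \Z^n\setminus\{0\}$, and since $2D\ell \in V - V$ we also have $|\ell|\lesssim R/D$. Permuting coordinates (and relabelling the $v_j$ accordingly), I may assume $\ell_n \ne 0$ — the only feature of $\ell$ that will be used. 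The intersection yields $x, y \in A^n$ with $\pi(x) = \pi(z)$, $z := y + 2D\ell$; as $\ell_n$ is a nonzero integer and $x_n, y_n \in A$, we get $|z_n - x_n| \ge 2D - D = D$.

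Expanding $\pi(x) = \pi(z)$ gives $v_n(z_n - x_n) = \sum_{i=1}^{n-1} v_i(x_i - z_i)$. Scaling the set identity $v_1 A + \cdots + v_n A = \pi(A^n)$ by the scalar $z_n - x_n$ produces a set of measure $|z_n - x_n|\,\lambda \ge D\lambda$; substituting the displayed relation into its last summand (so that the diagonal of an $(n-1)$-fold sumset, which always sits inside that sumset, does the work) shows this set lies inside $\sum_{i=1}^{n-1} v_i\bigl(z_n A - x_n A + x_i A - z_i A\bigr)$, which therefore also has measure $\ge D\lambda$. Finally, since $D = \max A - \min A \in A - A$ and $z = y + 2D\ell$ with $y \in A^n$ and $|\ell|\lesssim R/D$, each coordinate $z_i$ lies in an $O(|\ell|)$-fold sum of copies of $\pm A$, so each bracket $z_n A - x_n A + x_i A - z_i A$ sits inside $N A^{(2)} - N A^{(2)} = A_1$ once $N \gtrsim R/D$. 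Enlarging each bracket to $A_1$ gives $|v_1 A_1 + \cdots + v_{n-1} A_1| \ge D\lambda = \diam(A)\,|\pi(A^n)|$, and since $R$ — hence $N$ — was chosen as a continuous function of $n, \diam(A), |\pi(A^n)|$, uniformity on compacta is automatic.

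The main obstacle I anticipate is not conceptual but bookkeeping: one must choose $R$ as an explicit continuous function of $n, \diam(A), |\pi(A^n)|$ and keep track of $|\ell|\lesssim R/D$, so that the final $N \sim R/D$ inherits the correct dependence and nothing secretly depends on $A$ beyond these three quantities. The one genuinely non-formal ingredient is the pigeonholing step, which relies on $\Z^n$ having full rank in $\R^n$ — precisely what fails over $\mathbb{C}$, where the statement is false — together with the observation that one may rotate coordinates so the collision vector has $\ell_n \ne 0$, since a relation with $\ell_n = 0$ would be useless for eliminating $v_n$.
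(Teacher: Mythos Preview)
Your proposal is correct and follows essentially the same approach as the paper's proof: the slab $V$, the application of Blichfeldt to the lattice $2D\Z^n$, the pigeonhole collision of projected translates, the permutation to ensure $\ell_n\ne 0$, the scaling-and-substitution step, and the final containment in $A_1$ via $D\in A-A$ and $|\ell|\lesssim R/D$ all match. Your parenthetical remark that the substitution works because the diagonal of a sumset sits inside the sumset is in fact a helpful clarification of a step the paper leaves implicit.
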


\section{Proof of Theorem \ref{thm.ring}}

We start with a variant of Theorem \ref{thm.ring} in which $\mu$ is $(\kappa, 1)$ instead of $(\kappa, \delta^{-\e})$. While superficially a much stronger condition, we will later reduce Theorem \ref{thm.ring} to this special case. We first apply Marstrand's Theorem to obtain expansion for $x_1A + \cdots + x_m A$, where $x_j \in N K^{(N)}- NK^{(N)}$ (which has positive measure by Theorem \ref{thm.expansion}). Then we use the triangle and Pl\"unnecke--Ruzsa inequalities many times to reduce the complexity of the sumset until we reach the desired form $xA + t^{-1}A$.

\begin{prop}\label{thm.ringstrong}
    Let $0 < \kappa \leq \sigma <1$. There exist $c, \e, \delta_0 \in (0,1]$, depending on $\kappa$ and $\sigma$ only, so that the following holds for all $0 < \delta < \delta_0$. Let $A \subset \R$ be a $(\delta,\kappa,\delta^{-\e})$-set of measure $\delta^{1-\sigma}$.  Let $\mu$ be a $(\kappa,1)$-measure supported on $[0,1]$. 

    Then  for any $t> \delta^{c}$,  there exists $x \in \spt \mu$ so that 
    \begin{equation}
        |t^{-1}A+xA| > \delta^{-c}|t^{-1} A|.
    \end{equation}
\end{prop}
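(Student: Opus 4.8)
The plan is to leverage Theorem~\ref{thm.expansion} to produce a set $B := NK^{(N)} - NK^{(N)}$ of positive Lebesgue measure (bounded below in terms of $\kappa$ only, since $\mu$ is a genuine $(\kappa,1)$-measure), and then run the expansion through Marstrand's projection theorem to get multilinear growth of $A$ under the ``coordinates'' coming from $B$. More precisely, I would first pass from $A$ to a sub-$(\delta,\kappa',\delta^{-O(\e)})$-set on which some auxiliary energy is controlled; in fact the cleanest route is: apply Theorem~\ref{thm.expansion} to $\mu$ to fix an integer $N\sim 1$ with $|B|\gtrsim_{\kappa} 1$, where $B = NK^{(N)} - NK^{(N)}$. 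Note every element of $B$ is a polynomial combination of points of $\spt\mu$ with integer coefficients bounded by $\sim 1$. The key point is that $B$ is a ``fat'' set and $B\subset [-O(1),O(1)]$.

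Next I would exploit the following multilinear expansion: let $m$ be the least integer with $m\kappa > 1$, so $m\sim 1$, and consider $\mu_A^{\times m}$ on $\R^m$, which by Lemma~\ref{lem.energy} has $I_1(\mu_A^{\times m})\lesssim \delta^{-O(\e)}$ (the non-concentration exponent is $m\kappa>1$, with constant $\delta^{-O(\e)}$). Applying the discretised form of Marstrand's theorem (Theorem~\ref{thm.marstrand}) with the direction measure being (a normalisation of) Lebesgue measure on the unit cube — which we may do since $B$ has measure $\gtrsim 1$ so contains a product of $m$ intervals of length $\gtrsim 1$, or more carefully one applies Marstrand to the measure on $S^{m-1}$ obtained by pushing forward — we find $x_1,\dots,x_m \in t^{-1}B$ (after scaling directions; here the factor $t^{-1}$ with $t>\delta^c$ costs at most $\delta^{-c}$) so that $|x_1 A + \cdots + x_m A|_\delta \gtrsim \delta^{O(\e)}\,\delta^{-m} |A|$, i.e.\ the $m$-fold weighted sumset is nearly as large as the full box $[-O(\delta^{-c}),O(\delta^{-c})]$ permits, which for a $(\delta,\kappa)$-set of measure $\delta^{1-\sigma}$ means growth by a factor $\gtrsim \delta^{-c'}|A|_\delta$ for suitable $c'$. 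Concretely one wants $|x_1A+\cdots+x_mA|_\delta \ge \delta^{-2c}|A|_\delta$.

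Then comes the additive-combinatorial reduction: I would use the Ruzsa triangle inequality (Lemma~\ref{lem.triangleineq}) and Pl\"unnecke--Ruzsa (Proposition~\ref{prop.pr}) repeatedly to ``peel off'' the variables one at a time. If, toward a contradiction, $|t^{-1}A + xA|_\delta \le \delta^{-c}|t^{-1}A|_\delta = \delta^{-c}|A|_\delta$ for every $x\in\spt\mu$ — hence for every $x$ that is a bounded integer-polynomial combination of points of $\spt\mu$, i.e.\ for every $x \in B$ and, after absorbing the $t^{-1}$ and rescaling each $x_j$, essentially for every $x_j$ appearing above — then by Corollary~\ref{cor.simple} and iterating the triangle inequality one gets $|x_1 A + \cdots + x_m A|_\delta \le \delta^{-Cmc}|A|_\delta$. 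Choosing $c$ small enough relative to $c'$ (and $m\sim 1$) contradicts the Marstrand lower bound. The one subtlety is that each $x_j\in t^{-1}B$ is not literally in $\spt\mu$; to handle this I would note that each $x_j \in t^{-1}(NK^{(N)}-NK^{(N)})$, so $x_jA$ is contained in a bounded sum/difference of sets of the form $t^{-1} x A$ with $x\in K^{(N)}$, and each such $x$ is a bounded product of points of $\spt\mu$; I then control $|t^{-1}xA+t^{-1}A|_\delta$ for such $x$ by telescoping through the factors of the product $x = s_1\cdots s_N$, using the hypothesis (applied at each partial product, which lies in $\spt\mu$ only after the $[0,1]$ support is used — here the $(\kappa,1)$ support in $[0,1]$ is what keeps all partial products in a fixed bounded set) together with Pl\"unnecke--Ruzsa.

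The main obstacle I anticipate is precisely this last bookkeeping step: translating the hypothesis ``$|t^{-1}A + xA|_\delta \le \delta^{-c}|A|_\delta$ for all $x\in\spt\mu$'' into a bound for $|v_1 A + \cdots + v_{n-1} A|_\delta$ where the $v_i$ are polynomial combinations of elements of $\spt\mu$, without losing more than a $\delta^{-O(c)}$ factor. This requires being careful that (i) the intermediate products stay in a bounded interval (which is where supporting $\mu$ on $[0,1]$, rather than $[-C,C]$, is used), (ii) the factor $t > \delta^c$ only costs $\delta^{-c}$, absorbed into the final constant, and (iii) the number $N$ of factors and the number $m$ of summands are both $\sim 1$, so the accumulated exponent $C m c$ (from the $K^C$-type losses in the Ruzsa machinery) stays below any prescribed threshold once $c$ is chosen small. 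Everything else — the Marstrand input and the energy estimate — is routine given the preliminaries.
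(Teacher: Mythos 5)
Your proposal follows essentially the same route as the paper: apply Theorem~\ref{thm.expansion} to get a positive-measure set $NK^{(N)}-NK^{(N)}$, use the energy bound on $\mu_A^{\times m}$ (with $m\kappa>1$) plus Marstrand to obtain an $m$-fold expanded sumset with coefficients from that set, and then telescope back down to a single $x\in\spt\mu$ via the Ruzsa triangle and Pl\"unnecke--Ruzsa inequalities; the paper runs this last reduction forward rather than by contradiction, but the content is identical. The only points to tighten are the Marstrand output (the correct bound is Lebesgue measure $\gtrsim\delta^{O(\e)}$, i.e.\ covering number $\gtrsim\delta^{O(\e)-1}\ge\delta^{-c'}|A|_\delta$ since $|A|=\delta^{1-\sigma}$, not $\delta^{-m}|A|$) and the need to first discard $\mu$'s mass on $[0,2^{-1/\kappa}]$ so the partial products $w_1\cdots w_j$ are bounded \emph{below}, which is what makes the rescalings in the telescoping harmless — a point you gesture at but should make explicit.
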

\begin{proof}
Note that $\mu[0,2^{-1/\kappa}] \leq 1/2$ by the $(\kappa,1)$-measure condition. Hence, replacing $\mu$ by its normalized restriction to $[2^{-1/\kappa},1]$, we may assume that $\mu$ is supported on points $x$ with $x\sim 1$, at the slight cost of changing the $(\kappa,1)$-measure condition to a $(\kappa,2)$-measure condition. 


Set $K = \spt \mu$. Apply Theorem \ref{thm.expansion} to the (new) measure $\mu$, which tells us there exists an integer $N$, which depends on $\kappa$ only, so that $|X| \gtrsim 1$, where $X = NK^{(N)} - NK^{(N)}$.

Let $m$ be  the least integer so that $m\kappa > 1$. Let $\nu$ be the uniform probability measure on $A^m$. Then $\nu$ is an $(m\kappa, \delta^{-\e m})$-measure, and by Lemma \ref{lem.energy} we have
\[
 I_1(\nu) \lesssim  \delta^{-\e m}.
\]
Since $|X|\gtrsim 1$ and $X$ is contained in a ball of radius $\sim 1$, we can find a subset $W\subset X^m$ such that $|w|\sim 1$ for all $w\in W$ and $|W|\gtrsim 1$, which implies that $\sigma^{m-1}(\pi(W)) \gtrsim 1$, where $\pi: \R^m\setminus\{0\}\to S^{m-1}$ is the radial projection. We can then apply Marstrand's Theorem (Theorem \ref{thm.marstrand}) to obtain $x_1,\dots, x_{m} \in X$ so that
 \begin{equation}
     |x_1A + \cdots + x_m A| \gtrsim \delta^{\e m}. 
 \end{equation}
 Setting $k = mN$, there exist $y_1,\cdots, y_{2k} \in K^{(N)}$ so that
  \begin{equation}
     |y_1A + \cdots + y_kA - y_{k+1}A - \cdots - y_{2k}A| \gtrsim \delta^{\e m} \ge \delta^{-(1-\sigma)/2}|A| \ge \delta^{-(1-\sigma)/4}|t^{-1}A|, 
 \end{equation}
provided that $\e m < (1-\sigma)/2$ (recall that $m$ depends only on $\kappa$) and that $t \ge \delta^{(1-\sigma)/4}$.  It now follows from Proposition \ref{prop.pr} (Pl\"unnecke--Ruzsa inequality) that there are $y\in K^{(N)}$ and a sign $* \in \{+,-\}$ so that
\begin{equation}
    |t^{-1}A * y A| \gtrsim \delta^{-\tfrac{(1-\sigma)}{8k}}|t^{-1}A| .
\end{equation}
By Corollary \ref{cor.sum-to-difference}, in any case we have
\begin{equation} \label{eq.gain-with-product-element}
        |t^{-1}A - yA| \gtrsim \delta^{-\tfrac{(1-\sigma)}{24k}}|t^{-1}A|.
\end{equation}
Let $y = w_1\cdots w_N $ with $w_i\in K$, and recall that $w_i\sim 1$.

Write $z_j = w_1\cdots w_j$ for $1\leq j \leq N$ (with $z_0=1$). By  Lemma~\ref{lem.triangleineq} (the triangle inequality), for $1\le j \leq N$ we have
\begin{equation}
    |t^{-1}A - z_j A| \lesssim \frac{|t^{-1}A - z_{j-1} A| |z_{j-1}A - z_j A|}{|z_{j-1}A|} \sim \frac{|t^{-1}A - z_{j-1} A| |A-w_j A|}{|A|}.
\end{equation}
Multiplying out the above inequalities for $j=1,\cdots, N$, and recalling that $y=z_N$, we have
\begin{equation}
    |t^{-1}A - y A| \lesssim |t^{-1}A-A| \, \prod_{j=1}^N \frac{|A - w_j A|}{|A|}.
\end{equation}
By the triangle inequality again, for each $1\leq j \leq N$, we have
\begin{equation}
    |A - w_j A| \lesssim \frac{|t^{-1}A - A| |t^{-1}A - w_j A|}{|t^{-1}A|}.
\end{equation}
Plugging this into the previous inequality, and recalling that $t\ge \delta^c$ (where $c$ is yet to be determined), we get
\begin{equation}
    |t^{-1}A - y A| \lesssim |t^{-1}A - A|^{N+1}\cdot \left(\prod_{j=1}^N |t^{-1}A - w_j A| \right) \cdot \delta^{cN}|t^{-1}A|^{-2N}.
\end{equation}
Recalling \eqref{eq.gain-with-product-element}, we conclude that there is $w\in \{1,w_1,\ldots, w_N\}$  
\begin{equation}
    |t^{-1}A - w A| \gtrsim \delta^{\frac{cN}{2N+1}-\tfrac{1-\sigma}{24k(2N+1)}} |t^{-1}A|.
\end{equation}
If $w=1$, then by Corollary \ref{cor.simple} we have
\begin{equation}
    |t^{-1}A - w_1 A| \gtrsim |w_1 A|^{1/2} |t^{-1}A-A|^{1/2} \ge \delta^{cN/(4N+2) - (1-\sigma)/(48k(2N+1))} |t^{-1}A|.
\end{equation}
Another application of Corollary \ref{cor.sum-to-difference} yields the desired conclusion with 
\[ 
    c = \frac{1-\sigma}{400 k(2N+1)}.
\] 
\end{proof}

\begin{proof}[Proof of Theorem \ref{thm.ring}]
    We will use a well-known trick to reduce Theorem \ref{thm.ring} to Proposition \ref{thm.ringstrong}. Let $A$ and $\mu$ be as in the statement.

    For each closed interval $I$ of length $r(I)\in [\delta,1]$, let $m(I)=\mu(I)/r(I)^{\kappa/2}$. It is easy to see that $m$ achieves a maximum; let $I_0$ an interval where the maximum is attained, and set $r_0=r(I_0)$. Since $\mu$ is $(\kappa,\delta^{-\e})$, we
    have 
    \[
     r_0^{\kappa/2} \leq \mu(I_0) \leq r_0^{\kappa}\delta^{-\e},
    \]
    where the left-hand inequality follows from $m(I_0) \geq m([0,1]) = 1$. This implies that 
    \begin{equation} \label{eq.r0lower}
         \delta^{2\e/\kappa} \le r_0 \le 1.
    \end{equation}

    By the maximality of $m(I_0)$, we have $\mu(J)/\mu(I_0)\le (r/r_0)^{\kappa/2}$ for any interval $J\subset I_0$ of length $r\le r_0$. This shows that if $h$ renormalizes $I_0$ to $[0,1]$, then the measure $\nu = h_\#(\mu_{|I_0})/\mu(I_0)$ is a $(\kappa/2,1)$-measure. We can ensure $r_0> \delta^{c}$ by taking $\e$ small enough in terms of $c$ and $\kappa$, hence in terms of $\kappa$ and $\sigma$ only. We can therefore apply Proposition \ref{thm.ringstrong} to $\nu$ and $t=r_0^{-1}$ to conclude that there exist $c = c(\kappa,\sigma) > 0$ and $y \in \spt \nu = h(\spt \mu\cap I_0)$ so that
    \[
        |r_0^{-1}A + y A| > \delta^{-c}|r_0^{-1} A| \ge \delta^{-c}|A|.
    \]
    Let $x=h^{-1}(y)\in \spt\mu \cap I_0$. Then $y=r_0^{-1}(x - x_0)$, where $x_0$ is the left endpoint of $I_0$. Note that $x_0\in\spt\mu$, since otherwise we could have chosen a smaller interval to maximize $m$. 

    Putting this into the previous inequality and recalling \eqref{eq.r0lower}, we obtain
    \[
        |A+x A- x_0A| \ge |A + (x - x_0) A| > r_0 \delta^{-c}|A| > \delta^{-c/2}|A|,
    \]
    provided that $\e$ is small enough in terms of $\kappa,\sigma$. The Pl\"unnecke--Ruzsa inequality then yields that for some $y\in \{ 1, x, -x_0\}$ we have
    \[
        |A + y A| > \delta^{-c/6}|A|.
    \]
    If $y=x$ we are done. If $y=-x_0$, we get from Corollary \ref{cor.sum-to-difference} that $|A+x_0 A| > \delta^{-c/18}|A|$. If $y=1$, let $x_2=\max \spt\mu$. Since $\mu$ is a $(\kappa,\delta^{-\e})$-measure, we have $x_2\ge \delta^{\e/\kappa}$. Applying Corollary \ref{cor.simple}, we conclude 
    \[
        |A + x_2 A | \gtrsim |x_2 A|^{1/2} |A+A|^{1/2} \ge \delta^{\e/\kappa-c/12}|A| .
    \] 
    Taking $\e$ small enough in terms of $c$ and $\kappa$, we obtain the desired conclusion with $c/24$ in place of $c$.
\end{proof}

\section{Proof of Theorem \ref{thm.projhaus}}

To deduce Theorem \ref{thm.projhaus} from Theorem \ref{thm.ring}, we follow the outline of Bourgain's argument in \cite{Bourgain10}. Our main simplification is that we rely on Kaufman's projection theorem (Theorem \ref{thm.kauf}) to establish the non-concentration condition on the projections, instead of the more complicated ad hoc argument of \cite{Bourgain10}. 

Suppose the projection of $E$ is roughly $\alpha/2$ dimensional in $2$ well-separated directions $\theta,\theta'$. Thus, we can embed $E$ densely in a Cartesian product $\pi_{\theta}(E)\times \pi_{\theta'}(E)$. If $E$ was literally a Cartesian product, we would be able to apply Theorem \ref{thm.ring} to obtain expansion. This does not quite work as $E$ is only dense in a Cartesian product, and we seek an estimate that holds simultaneously for all ``dense'' subsets $G$ of $E$, a fact that is crucial in most if not all applications of the theorem. This is achieved through Balog-Szemer\'edi-Gowers, the robustness of energy when passing to subsets, and the following simple but clever argument due to Bourgain:

\begin{lemma} \label{lem.proj-product-case}
    Fix $x\in \R$ and let $\pi(a,b)=a+xb$. Let $A,B\subset \R$ be bounded sets. Then, for any $G\subset A\times B$ we have
    \[
        |\pi(G)| \ge \frac{|G||A+x A|}{|A-A||A-B|}.
    \]
\end{lemma}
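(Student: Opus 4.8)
The plan is to slice $G$ into the fibers of $\pi$ and reduce the inequality to a uniform upper bound on the sizes of these fibers; that fiber bound will then come from the Ruzsa triangle inequality. We may assume $x\ne 0$ (the case $x=0$ is immediate: then $\pi$ is the first-coordinate projection, $A+xA=A$, every fiber of $\pi$ restricted to $G$ sits in a vertical line, so $|G|\le|\pi(G)|\,|B|$, and one concludes using $|A-A|\ge|A|$ and $|A-B|\ge|B|$), and that $A,B$ have positive measure, otherwise $|G|=0$. For $t\in\R$ I would parametrize the fiber $\pi^{-1}(t)=\{(t-xb,b):b\in\R\}$ by $b$ and set $B_t:=\{b:(t-xb,b)\in G\}$. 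Since the shear $(a,b)\mapsto(a+xb,b)$ preserves Lebesgue measure, $|G|=\int_{\pi(G)}|B_t|\,dt\le|\pi(G)|\cdot\sup_t|B_t|$, so it suffices to prove
\[
  |B_t|\cdot|A+xA|\le|A-A|\cdot|A-B|\qquad\text{for every }t.
\]

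To prove this pointwise bound I would fix $t$ and pass to the ``projected fiber'' $A_t:=t-xB_t=\{t-xb:b\in B_t\}\subseteq\R$. Two facts make it useful: first, $|A_t|=|x|\,|B_t|$, since $A_t$ is an affine image of $B_t$ with derivative $-x$; second, as $(t-xb,b)\in G\subseteq A\times B$ for $b\in B_t$, we have both $A_t\subseteq A$ and $B_t\subseteq B$, and the latter gives $A_t\subseteq t-xB$. Now I would apply the Ruzsa triangle inequality (Lemma~\ref{lem.triangleineq}) to the three sets $A$, $A_t$ and $-xA$:
\[
  |A+xA|\cdot|A_t|=|A-(-xA)|\cdot|A_t|\le|A-A_t|\cdot|A_t+xA|.
\]
Finally, $A_t\subseteq A$ forces $|A-A_t|\le|A-A|$, while $A_t\subseteq t-xB$ forces $A_t+xA\subseteq t+x(A-B)$, so $|A_t+xA|\le|x|\,|A-B|$. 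Substituting $|A_t|=|x|\,|B_t|$ and cancelling $|x|$ yields the pointwise estimate, and hence the lemma.

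The only real difficulty is obtaining an efficient bound on a single fiber $|B_t|$: the obvious estimates $|B_t|\le|B|$ and $|B_t|\le|A|/|x|$ are far too weak — in the extremal arithmetic-progression configurations where the lemma is sharp they lose a large power of the scale. The gain comes from viewing the fiber inside $A$ via $A_t\subseteq A$, observing that the dilated sumset $A_t+xA$ is confined to a single translate of $x(A-B)$ — this is exactly where the inclusion $A_t\subseteq t-xB$ enters — and then letting the Ruzsa triangle inequality trade this smallness of $|A_t+xA|$ for the desired upper bound on the full sumset $|A+xA|$ in terms of $|A_t|$. Everything else is bookkeeping.
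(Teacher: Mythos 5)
Your proof is correct, and it takes a genuinely different route from the paper's. The paper argues by a covering identity: every point of $A\times A$ lies in $|G|$-many translates $G-y$ with $y\in (A-A)\times(B-A)$, so pushing forward by $\pi$ and integrating gives $|A+xA|\le |\pi(G)|\,|A-A|\,|B-A|/|G|$ directly --- a continuous Ruzsa-covering argument with no slicing. You instead slice $G$ along the fibers of $\pi$ via the measure-preserving shear, reducing everything to the uniform fiber bound $|B_t|\,|A+xA|\le |A-A|\,|A-B|$, which you extract from the triangle inequality applied to $A$, $A_t$, $-xA$ using the two inclusions $A_t\subseteq A$ and $A_t\subseteq t-xB$; this is a nice structural explanation of where the gain over the trivial bounds $|B_t|\le|B|$ comes from. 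The two arguments are comparably short and yield the same inequality. One small caveat: you invoke Lemma~\ref{lem.triangleineq}, which as stated in the paper is for $\delta$-covering numbers and carries an unspecified multiplicative constant; your argument needs the continuous Ruzsa triangle inequality $|X-Z|\,|Y|\le |X-Y|\,|Y-Z|$ for Lebesgue measure with constant $1$. That version is true and standard (and in any case a harmless constant would survive the lemma's later use, where $\delta^{O(\eta)}$ losses are tolerated), but you should either state it in that form or accept an implicit constant in the conclusion. The $x=0$ case and the measurability points (Fubini applied to the sheared set, sup over fibers) are handled correctly and are no worse than the implicit measurability assumptions in the paper's own proof.
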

\begin{proof}
    We start by noting that
\begin{equation}
    1_{A \times A} \leq \frac{1}{|G|}\int_{(A-A) \times (B-A)}1_{G-y}\,dy. 
\end{equation}
Indeed, for $(a,a') \in A \times A$, for any $(b_1, b_2)\in G$, we have $y=(b_1-a, b_2-a')\in (A-A)\times(B-A)$ and $(b_1, b_2)- y = (a, a')$. It follows that
\begin{equation} \label{eq.ruzsa}
    1_{A+xA} \leq \frac{1}{|G|}\int_{(A-A) \times (B-A)}1_{\pi(G) - \pi(y)}dy.
\end{equation}
Integrating both sides of \eqref{eq.ruzsa}, we obtain the desired inequality.
\end{proof}
In our application below, $A$ will satisfy the assumptions of Theorem \ref{thm.ring} for a suitable measure $\mu$, so that $|A+xA|$ is large for some $x\in\spt\mu$. Assuming $|A|$, $|B|$ and $|A-B|$ are roughly comparable, the above lemma then yields expansion for $|\pi(G)|$ simultaneously for all dense subsets $G$ of $A\times B$.

The following proposition is the main step in the proof of Theorem \ref{thm.projhaus}. It has nearly the same assumptions, and a slightly weaker conclusion: the dense set is found inside a set $F$ of ``large'' measure inside $E$, rather than inside $E$ itself.
\begin{proposition}\label{prop.proj1}
     Let $0 < \alpha <2, \beta, \kappa > 0$. There exists a universal $C\ge 1$, and $\eta > 10\e>0$, $\delta_0 \in (0,1]$ depending on $\alpha, \beta, \kappa$ only so that the following holds for all $0 < \delta < \delta_0$. 

     Let $\nu$ be a $(\kappa,\delta^{-\e})$-measure on $S^1$. Let $E\subset B^2(0,\delta^{-\e})$ be a $(\delta,\beta,\delta^{-\e})$-set of measure $\le\delta^{2-\alpha}$.  There exist $\Theta \subset \spt\nu$ with $\nu(\Theta) > 1- \delta^{\e}$, and  a union of $\delta$-balls $F \subset E$ with $|F | > \delta^{C\eta}|E|$, so that for all $\theta \in \Theta$ we have
    \begin{equation}
        |\pi_\theta(G)| > \delta^{-\eta} |E|^{1/2} \text{ for any  union of $\delta$-balls } G \subset F \text{ with } |G| > \delta^\e|F|.
    \end{equation}
\end{proposition}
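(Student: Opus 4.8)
The plan is to argue by contradiction, turning a failure of the conclusion into an application of Theorem~\ref{thm.ring} through the product mechanism of Lemma~\ref{lem.proj-product-case}. Fix a small $s\in(0,\min\{\kappa,\beta,1\})$ (the energy exponent), put $\sigma_0=\alpha/2+\eta$ and $\e=\eta/20$, and let $\mu_E$ be the uniform probability measure on $E$; the value $\eta>0$ will be chosen at the very end, small in terms of $\alpha,\beta,\kappa$, the universal Balog--Szemer\'edi--Gowers constant, and the constant $c$ and admissible $\e$ produced by Theorem~\ref{thm.ring} for the pair $(s/2,\sigma_0)$. All losses of the form $\delta^{-O(\e)}$ or $\delta^{-O(\eta)}$ are harmless. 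By Lemma~\ref{lem.energy}, $I_s(\mu_E)\lesssim\delta^{-\e}$, so Theorem~\ref{thm.kauf} (with the $(\kappa,\delta^{-\e})$-measure $\nu$ and exponent $s<\kappa$) and Chebyshev give a set $\Theta_1$ with $\nu(\Theta_1)>1-\delta^{2\e}$ and $I_s(\pi_\theta\mu_E)\le\delta^{-4\e}$ for $\theta\in\Theta_1$.

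Now suppose the conclusion fails. Then for every union of $\delta$-balls $F\subset E$ with $|F|>\delta^{C\eta}|E|$, the set of directions $\theta$ admitting a union of $\delta$-balls $G\subset F$ with $|G|>\delta^\e|F|$ and $|\pi_\theta(G)|\le\delta^{-\eta}|E|^{1/2}$ has $\nu$-measure $\ge\delta^\e$ (otherwise its complement, together with $F$, would witness the conclusion). Applying this with $F=E$ and intersecting with $\Theta_1$ gives $\theta_0\in\Theta_1$ and a dense $G_0\subset E$ with small $\pi_{\theta_0}$-image; applying it again with $F=G_0$, and intersecting the resulting $\nu$-positive set with the complement of $B(\theta_0,\rho)$ for a suitable $\rho=\delta^{O(\e)}$ (possible since $\nu(B(\theta_0,\rho))\le\delta^{-\e}\rho^\kappa<\delta^{2\e}$), gives $\theta_1$ with $|\theta_1-\theta_0|\ge\rho$ and a dense $G_1\subset G_0$; a third application with $F=G_1$ gives $\theta_2$, $\rho$-separated from both, and a dense $G_2\subset G_1$. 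Since $G_2\subset G_1\subset G_0$ and the threshold $\delta^{-\eta}|E|^{1/2}$ never changes, $|\pi_{\theta_i}(G_2)|_\delta\le\delta^{-\eta}|E|_\delta^{1/2}$ for $i=0,1,2$ simultaneously, and $G_2$ is a $(\delta,\beta,\delta^{-O(\e)})$-set with $|G_2|\gtrsim\delta^{O(\e)}|E|$. Because $\theta_0\in\Theta_1$, the measure $\pi_{\theta_0}\mu_{G_2}$ has finite $s$-energy, so Lemma~\ref{lem.energy-to-frostman} lets me discard a controlled fraction of $G_2$ to reach $G^\star\subset G_2$ with $A_0:=\pi_{\theta_0}(G^\star)$ a $(\delta,s/2,\delta^{-O(\e)})$-set; crucially, performing this reduction against the measure on $G_2$ rather than on $E$ costs only a constant factor, so $G^\star$ is still dense in $E$. (A large enough universal $C$ makes all three applications legitimate.)

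Next I set up coordinates and invoke BSG. Since $\theta_0,\theta_1,\theta_2$ are pairwise distinct, there are linear coordinates $\Phi=(L_0,L_1)$ on $\R^2$, with $L_0$ a $\delta^{\pm O(\e)}$-multiple of $\pi_{\theta_0}$ and $L_1$ a $\delta^{\pm O(\e)}$-multiple of $\pi_{\theta_1}$, in which $\pi_{\theta_2}=L_0+L_1$. Then $\Phi(G^\star)\subset\tilde A\times\tilde B$ with $\tilde A=L_0(G^\star)$, $\tilde B=L_1(G^\star)$; both have $\delta$-covering number $\delta^{\pm O(\e+\eta)}|E|_\delta^{1/2}$ (the lower bound because $|G^\star|_\delta\le|\tilde A|_\delta|\tilde B|_\delta$ and $|G^\star|_\delta\gtrsim\delta^{O(\e)}|E|_\delta$), while $|\{L_0(z)+L_1(z):z\in G^\star\}|_\delta=|\pi_{\theta_2}(G^\star)|_\delta\le\delta^{-\eta}|E|_\delta^{1/2}$. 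Thus $G^\star$ is dense in $\tilde A\times\tilde B$ with small sumset along it, so Proposition~\ref{prop.bsg} yields $A'\subset\tilde A$ and $B'\subset\tilde B$, comparable in size to $\tilde A,\tilde B$, with $|A'+B'|_\delta\lesssim\delta^{-O(\e+\eta)}|A'|_\delta^{1/2}|B'|_\delta^{1/2}$ and with $F:=\Phi^{-1}(A'\times B')\cap G^\star$ (which we take to be a union of $\delta$-balls) dense in $A'\times B'$ and still a $\gtrsim\delta^{O(\e+\eta)}$-fraction of $E$, so $|F|>\delta^{C\eta}|E|$. Corollary~\ref{cor.simple} bounds $|A'+A'|_\delta$ by $|A'+B'|_\delta^2/|B'|_\delta$, and with Corollary~\ref{cor.sum-to-difference} (using $|\tilde A|_\delta\sim|\tilde B|_\delta$) this gives $|A'-A'|_\delta,|A'-B'|_\delta\lesssim\delta^{-O(\e+\eta)}|A'|_\delta$. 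Finally $A'$, being a $\delta^{O(\e+\eta)}$-fraction of the non-concentrated $A_0$, is a $(\delta,s/2,\delta^{-O(\e+\eta)})$-set with $|A'|\le|A_0|\lesssim\delta^{1-\eta}|E|_\delta^{1/2}\le\delta^{1-\sigma_0}$, so after a harmless affine normalization $A'$ is an admissible input for Theorem~\ref{thm.ring} with parameters $(s/2,\sigma_0)$.

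It remains to run the expansion uniformly over directions. For $\theta$ outside $\rho$-neighbourhoods of $\theta_0,\theta_1,\theta_2$, $\pi_\theta$ equals a $\delta^{\pm O(\e)}$-multiple of $(a,b)\mapsto a+x_\theta b$ in the $\Phi$-coordinates, where $\theta\mapsto x_\theta$ is a M\"obius map with $\delta^{-O(\e)}$-bounded distortion on the relevant arc; its pushforward $\nu'$ of the normalized restriction of $\nu$ is therefore, after affine normalization to $[0,1]$, a $(\kappa,\delta^{-O(\e)})$-measure, hence a $(s/2,\delta^{-O(\e)})$-measure. I claim $\mathcal{X}:=\{x\in\spt\nu':|A'+xA'|_\delta\le\delta^{-c}|A'|_\delta\}$ has $\nu'(\mathcal{X})<\delta^{2\e}$: otherwise the normalized restriction $\nu'|_{\mathcal{X}}$ is again a $(s/2,\delta^{-O(\e)})$-measure on $[0,1]$, and Theorem~\ref{thm.ring} applied to $A'$ and $\nu'|_{\mathcal{X}}$ produces $x\in\mathcal{X}$ with $|A'+xA'|_\delta>\delta^{-c}|A'|_\delta$, a contradiction. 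Hence there is a set $\Theta$ with $\nu(\Theta)>1-\delta^\e$ such that $|A'+x_\theta A'|_\delta>\delta^{-c}|A'|_\delta$ for $\theta\in\Theta$, and then for every union of $\delta$-balls $G'\subset F$ with $|G'|>\delta^\e|F|$, Lemma~\ref{lem.proj-product-case} applied to $A',B',x_\theta$, together with the density of $F$ in $A'\times B'$ and the estimates on $|A'-A'|_\delta$, $|A'-B'|_\delta$ and $|B'|_\delta$, gives
\[
  |\pi_\theta(G')|_\delta\;\gtrsim\;\delta^{O(\e)}\,\frac{|G'|_\delta\,|A'+x_\theta A'|_\delta}{|A'-A'|_\delta\,|A'-B'|_\delta}\;\gtrsim\;\delta^{-c+O(\e+\eta)}|E|_\delta^{1/2}\;>\;\delta^{-\eta}|E|_\delta^{1/2},
\]
the last inequality by the choice of $\eta$ (recall $c=c(s/2,\sigma_0)$ is bounded below as $\eta\to0$). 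Since $(\Theta,F)$ witnesses the conclusion, this contradicts our assumption, and the proposition follows. The main obstacle is the bookkeeping of the second and third steps: arranging, by interleaving the passages to bad directions with the energy-to-Frostman reductions, that a single dense subset of $E$ is simultaneously non-concentrated in the direction $\theta_0$ and has small projection in three separated directions, and then carrying the polynomially mild $\delta^{-O(\e+\eta)}$ losses from BSG, the Ruzsa-type inequalities, and the M\"obius change of variables through to a self-consistent choice of $\eta$.
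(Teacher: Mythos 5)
Your proof is correct and follows essentially the same route as the paper's: extract three separated directions with small projections by iterating the negation of the conclusion, control energies via Kaufman's theorem and Lemma \ref{lem.energy-to-frostman}, embed a dense subset of $E$ in a product and apply Balog--Szemer\'edi--Gowers using the third direction, then combine Theorem \ref{thm.ring} with Lemma \ref{lem.proj-product-case}. The remaining differences are cosmetic --- you normalize the three directions by a linear change of the two projection coordinates rather than a projective map of the plane, regularize only the first coordinate (which is indeed all that is needed), and produce the full-measure set $\Theta$ by restricting $\nu'$ to the putative bad set of slopes instead of invoking the contradiction hypothesis once more --- and your handling of the slope reparametrization is, if anything, more explicit than the paper's.
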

\begin{proof} Fix $\eta>0$ and suppose the result is false for this choice of $\eta$, a suitably large universal constant $C$, and $\e<\eta/10$ sufficiently small. We will derive a contradiction if $\eta$ is sufficiently small in terms of $\alpha, \beta, \kappa$. Throughout the proof we assume that $\delta\ll_{\alpha,\beta,\kappa,\eta,\e} 1$ so that various implicit constants are absorbed into small powers of $\delta^{-1}$.

By making $\kappa$ smaller if necessary, we may assume that $\kappa<\beta$. By Lemma \ref{lem.energy}, we have $I_{(\kappa+\beta)/2}(\mu_E) \lesssim \delta^{-\e}$. By Theorem \ref{thm.kauf} and Markov's inequality, we have that 
\[
    \nu(\Theta_{\text{bad}})\lesssim \delta^{2\e}, \quad\text{where } \Theta_{\text{bad}} = \bigl\{\theta\in S^1: I_{\kappa}(\pi_{\theta}\mu_E) > \delta^{-3\e}\bigr\}.
\]
    
\textbf{Claim}. There are $\theta_1, \theta_2, \theta_3\in S^1$, and $E_3\subset E_2\subset E_1\subset E$ such that 
	\begin{enumerate}[(i)]
		\item \label{it:i} $|E_{i+1}|\ge \delta^{\e} |E_i|$ for $i=0,1,2$, where we set $E_0=E$, 
		\item  \label{it:ii} $|\theta_i-\theta_j|\ge \delta^{10\e/\kappa}$ for $i\neq j$, 
		\item \label{it:iii} $|\pi_{\theta_i}(E_i)| \leq \delta^{-\eta}|E|^{1/2}$ for $i=1, 2, 3$,
		\item \label{it:iv} $I_\kappa(\pi_{\theta_i}\mu_E) \leq \delta^{-3\e}$ for $i=1,2,3$.
	\end{enumerate}
	
To prove the claim, we will show that for any $E'\subset E$, $|E'|\geq \delta^{3\e}|E|$ and any $\Omega$ of at most $2$ unit vectors, there exist $\theta' \in \spt \nu\setminus \Theta_{\text{bad}}$ and $E''\subset E', |E''|\ge \delta^{\e}|E'|$ such that 
	\begin{enumerate}[(a)]
		\item \label{it.a} $|\theta'-\theta| \geq  \delta^{10\e/\kappa}$ for any $\theta\in \Omega$, 
		\item \label{it.b} $|\pi_{\theta'}(E'')| \leq \delta^{-\eta}|E|^{1/2}$. 
	\end{enumerate}
	Once the above statement is proved, we can apply it iteratively three times, starting with $E'=E$. Each step produces $E_{i+1}\subset E_i$ with $|E_{i+1}|\ge \delta^{\e}|E_i|$ and $\theta_i$ separated from the previous $\theta_j, j<i$.

    Since $\nu$ is a $(\kappa, \delta^{-\e})$-measure,
    any ball of radius $\delta^{10\e/\kappa}$ has $\nu$-measure at most $\delta^{9\e}$. This implies that 
    \[
        \nu\bigl(\Omega^{(10\e/\kappa)}\cup \Theta_{\text{bad}}\bigr)\lesssim \delta^{2\e},
    \]
    Since we are assuming that the claimed result is false (in particular, the choice $F=E'$ does not work), there exist $\theta'\notin\Theta_{\text{bad}}$ and a union of $\delta$-balls $E''\subset E'$ with $|E'|\geq \delta^{\e}|E|$ such that \eqref{it.a}--\eqref{it.b} hold. The claim then follows.

	By a projective transformation, we may suppose that 
	\begin{equation}\label{eq.theta}
		\arg\theta_1=0, \arg\theta_2=\pi/2, \arg\theta_3=\pi/4
	\end{equation}
	up to replacing $\e$ by $O(\e/\kappa)$. This step will be explained at the conclusion of the proof. 

    By \eqref{it:i} in the claim, the set $E_3$ satisfies $|E_3|\geq \delta^{3\e}|E|$. In turn, from \eqref{it:iv} in the Claim and Lemma \ref{lem.energy}, for $i=1,2$ we have
        \[
            I_{\kappa}(\pi_{\theta_i}\mu_{E_3}) \le \delta^{-6\e} I_{\kappa}(\pi_{\theta_i}\mu_{E}) \le \delta^{-\eta}.
        \]
        We apply Lemma \ref{lem.energy-to-frostman} to $\pi_{\theta_i}\mu_{E_3}$ with $\kappa$ in place of $s$, $\delta^{-\eta}$ in place of $K$, and $O_{\kappa}(1)$ in place of $L$. We conclude that for $i=1,2$ there are $(\delta,\kappa/2,\delta^{-2\eta})$-sets $A_i\subset \pi_{\theta_i}(E_3)$ such that
        \[
            \pi_{\theta_i}\mu_{E_3}(A_i) =  \mu_{E_3}(E_3 \cap \pi_{\theta_i}^{-1}A_i)\ge 3/4.
        \]
        Let 
            \[ 
                E' = E_3 \cap \pi_{\theta_1}^{-1}A_1 \cap \pi_{\theta_2}^{-1}A_2. 
            \] 
            Then $|E'|\gtrsim |E_3| \geq \delta^{3\e}|E|$ and, by the assumption \eqref{eq.theta}, we have 
            \[ 
                E'\subset A_1\times A_2.
            \]   
            By part \eqref{it:iii} of  the Claim, 
            \[   
                |A_i| \leq |\pi_{\theta_i}(E_3)| \leq \delta^{-\eta}|E|^{1/2}, \quad i=1,2.
            \]
             Hence, since $\e<\eta/10$,
        \begin{align}
            |E'| &\geq \delta^{\eta}|A_1||A_2|, \label{eq.E3-dense}\\
        |A_i|&\geq \frac{|E'|}{|A_{2-i}|} \geq \delta^{\eta}|E'|^{1/2}\ge \delta^{2\eta}|E|^{1/2} \quad i=1,2. \label{eq.E3-almost-prod}
        \end{align}

        We would like to apply Lemma \ref{lem.proj-product-case} to $A_1, A_2$ and dense subsets of $G$ of $E'$. The issue is that we do not have good control on $|A_1 - A_2|$. We will get around this by using the third projection $\pi_{\theta_3}$ and applying Balog-Szemer\'edi-Gowers.

        From now on, we use the notation $X\lessapprox Y$ to mean that $X\lesssim \delta^{-O(\eta)} Y$, and likewise for $\gtrapprox$. Using \eqref{eq.E3-almost-prod} and \eqref{it:iii} again, this time for $\theta_3$, we have
        \[
           |A_1\overset{E'}{+}A_2|_{\delta} = |\pi_{\theta_3}(E')|_{\delta} \leq \delta^{-\eta}|E|^{1/2} \le \delta^{-2\eta} |A_1|_{\delta}^{1/2} |A_2|_{\delta}^{1/2}.
        \] 
        In light of \eqref{eq.E3-dense}, we are able to apply Balog-Szemer\'edi-Gowers (Proposition \ref{prop.bsg}) with $K=\delta^{-2\eta}$, to obtain sets  $B_i\subset A_i$, which we can take to be union of $\delta$-balls, such that:
\begin{equation}\label{eq.bsg1}
	 |B_i| \gtrapprox |A_i| \overset{\eqref{eq.E3-almost-prod}}{\gtrapprox}  |E|^{1/2}, \quad i=1,2,
	\end{equation}  
	\begin{equation} \label{eq.bsg2} 
	 |B_1+B_2|_{\delta}\lessapprox |A_1|_{\delta}^{1/2} |A_2|_{\delta}^{1/2},
		\end{equation}
		and 
		\begin{equation}\label{eq.largesubset}
			\left|E'\cap (B_1\times B_2)\right|_{\delta} \gtrapprox |A_1|_{\delta} |A_2|_{\delta} \overset{\eqref{eq.E3-almost-prod}}{\gtrapprox}  |E|.
		\end{equation}
        Recall that $A_i$ is a $(\delta, \kappa/2, \delta^{-2\eta})$-set. It follows from \eqref{eq.bsg1} that  $B_i$ is a  $(\delta, \kappa/2, \delta^{-O(\eta)})$-set.  
		
		Let \begin{equation}\label{eq.up1}
			F= E' \cap (B_1 \times B_2),
		\end{equation}
        which by \eqref{eq.largesubset} satisfies 
		\begin{equation}\label{eq.up2}
			|F| \gtrapprox |E|.
		\end{equation}
        Enlarging $F$ slightly, we may assume that $F$ is a union of $\delta$-balls. We have reached the endgame of the proof. Recall that we are still assuming that the conclusion of the proposition fails, in particular for this choice of $F$. This means that there exists a set $\Theta_{\text{bad}}'\subset S^1$, which we can take to be a union of $\delta$-balls, with $\nu(\Theta_{\text{bad}}')\geq \delta^{\e}$ such that for any $\theta \in \Theta_{\text{bad}}'$, there exists some union of $\delta$-balls $G\subset F$ such that $|G|> \delta^{\e} |F|$, with
        \begin{align}
            |G| &> \delta^{\e} |F| \overset{\eqref{eq.up2}}{\gtrapprox} |E|, \label{eq.projfail1}\\
	        |\pi_{\theta} (G)|  &\leq \delta^{-\eta} |E|^{1/2}. \label{eq.projfail2} 
        \end{align} 
	
 Let $\nu'$ be the image of $\nu_{\Theta'_{\text{bad}}}$ under the map $\theta \mapsto \tfrac{1}{2\pi}\arg \theta  \in [0,1]$. Since $\nu$ is $(\kappa,\delta^{-\e})$ and $\nu(\Theta'_{\text{bad}})\ge \delta^{\e}$, this measure will be $(\ka/2,\delta^{-3\e})$, owing to the defined map being bi-Lipschitz. 
 
 We are now in a position to apply Theorem \ref{thm.ring} to $B_2$ and $\nu'$. The conclusion is that there is $c = c(\kappa,\sigma) > 0$ so that, assuming $\e,\eta\ll_{\kappa,\sigma} 1$, there exists $x \in \spt \nu'$ so that
\begin{equation}\label{eq.thm2concl}
    |B_1 +x B_1| > \delta^{-c}|B_1| \overset{\eqref{eq.bsg1}}{\gtrapprox} \delta^{-c}|E|^{1/2}.
\end{equation}
On the other hand, it follows from \eqref{eq.bsg1}, \eqref{eq.bsg2} and Corollaries \ref{cor.simple} and \ref{cor.sum-to-difference} that 
\begin{equation} \label{eq:Bi-small-diff}
  |B_1-B_i| \lessapprox  |E|^{1/2}, \quad i=1,2.
\end{equation}
Applying Lemma \ref{lem.proj-product-case} with $A=B_1$, $B=B_2$, and recalling \eqref{eq.projfail1}, \eqref{eq.projfail2}, \eqref{eq.thm2concl}, \eqref{eq:Bi-small-diff}, we conclude that
\[
   |E|^{1/2} |\pi_{\theta}(G)| \gtrapprox \frac{|G||B_1 + x B_1|}{|B_1 - B_1||B_1 - B_2|} \gtrapprox \delta^{-c}|E|^{1/2}.
\]
Recalling that $\gtrapprox$ hides $\delta^{O(\eta)}$ factors, we have reached a contradiction if $\eta$ is sufficiently small in terms of $c$, finishing the proof of the proposition under the assumption \eqref{eq.theta}.

 Finally, we  apply a projective transformation to reduce to the case of \eqref{eq.theta}. 
 Let $f: \R^2 \rightarrow \R^2$ be the projective transformation that maps the lines 
 \[
 y = \theta_jx, \ j = 1,2,3
 \]
to the $x$-axis, $y$-axis, and the line $y=x$. By \eqref{it:ii}, the map $f|_E:E\to f(E)=:E_0$ is bi-Lipschitz  with constant $\lesssim \delta^{-10\e/\kappa}$ - here we use that $E\subset B^2(0,\delta^{-\e})$.

Then $|E_0|\gtrsim \delta^{10\e/\kappa } |E|$, and it is readily checked that $E_0$ is a $(\delta, \beta, \delta^{-O(\e/\kappa)})$-set contained in $B^2(0, \delta^{-O(\e/\kappa)})$,  and that $\mu_0:=f\mu$ is a $(\kappa, \delta^{-O(\e/\kappa)})$-measure. 

Slightly abusing notation, let $f:S^1\to S^1$ also denote the induced map on directions, which is also bi-Lipschitz with constant $\lesssim \delta^{-10\e/\kappa}$. An application of the case \eqref{eq.theta} with $E$ replaced by $E_0$ and $\mu$ replaced by $\mu_0$ will yield a set $\Theta$ with $\mu_0(f(\Theta)) > 1-\delta^{\e}$ such that for any $\theta\in \Theta$ and any union of $\delta$-balls  $G_0\subset E_0$ with 
\begin{equation}\label{eq.conclusion}
	|\pi_{f(\theta)} (G_0)|> \delta^{-\eta_0}|E_0|^{1/2}
\end{equation}
for some $\eta_0 > 0$ provided that $\e, \delta$ are small enough. By reducing $\eta_0$ slightly by $O(\e/\kappa)$, and applying $f^{-1}$ to \eqref{eq.conclusion} gives the desired bound provided that $\eta$ and in turn $\e$ are small enough.
\end{proof}

We can now upgrade Proposition \ref{prop.proj1} to Theorem \ref{thm.projhaus} via an exhaustion and averaging argument that has appeared in several previous works (see e.g. \cite{He20}).

    \begin{proof}[Proof of Theorem \ref{thm.projhaus}]
            Let $0  < \e <\tfrac{\eta}{100}$ and $\delta > 0$ all be small enough to ensure the validity of multiple applications of Proposition \ref{prop.proj1} with $\e/3$ in place of $\e$.

            Let $F_1 \subset E$, $\Theta_1 \subset \spt\mu$ be such that the conclusion of Proposition \ref{prop.proj1} holds. If $ |E \setminus  F_1| \leq \delta^{2\e}|E|$, then end the process. Note that
            \begin{equation}
                |E \setminus F_1 \cap B(x,r)| \leq \delta^{-\e}r^\beta|E| < \delta^{-3\e}r^\beta |E \setminus F_1|.
            \end{equation}
        Thus, we can apply Proposition \ref{prop.proj1} to $E \setminus F_1$ and $\mu$, to obtain $F_2 \subset E \setminus F_1$, $\Theta_2 \subset \spt\mu$ satisfying the conclusion of the proposition. If $|E \setminus (F_1 \cup F_2)| \leq \delta^{2\e}|E|$ then end the process. Otherwise, we apply Proposition \ref{prop.proj1} to $E \setminus (F_1 \cup F_2)$ to obtain $F_3 \subset E \setminus (F_1 \cup F_2)$, $\Theta_3 \subset \spt\mu$. We continue inductively in this manner. The process must end since $|E|$ is finite and $|F_j| \geq \delta^{2\e+C\eta}$.

        At the conclusion of this process, we have found a collection $\{F_1,\dots,F_N\}$ of disjoint subsets of $E$, and another collection $\{\Theta_1,\dots,\Theta_N\}$ of subsets of $\spt \mu$, such that for all $1 \leq j \leq N$:
            \begin{enumerate}[(i)]
                \item  \label{it.Thetaj} $\mu(\Theta_j) > 1-\delta^{\e/3},$
                \item  \label{it.E} $\Big|E\setminus \bigcup_{i=1}^N F_j\Big| \leq \delta^{2\e}|E|,$
                \item  \label{it.conclusion} If $\theta \in \Theta_j,$ then 
                \begin{equation}
            |\pi_\theta(G)| > \delta^{-\eta}|E|^{1/2} \text{ for all unions of $\delta$-balls } G \subset F_j \text{ with } |G| > \delta^{3\e}|F_j|.
                \end{equation}
            \end{enumerate}
            
            Consider the product measure on $\mu \times \rho$, where $\rho$ is the (non probability) measure on $\{ 1, \dots, N \}$, where $j$ has weight $|F_j|$. Let $\Theta$ be the set of $\theta\in \spt \mu$ such that 
            \begin{equation}
       	\left|\bigcup_{j: \theta\in \Theta_j} F_j\right| =\sum_{j: \theta\in \Theta_j} |F_j|  = \rho\{ j: \theta \in \Theta_j \} \geq  (1-\delta^{\e})|E|. 
            \end{equation}
        By \eqref{it.Thetaj} and \eqref{it.E}, if $\delta$ is small enough,
        \[
            \int \rho\{ j: \theta \in \Theta_j \} d\mu(\theta) = (\mu\times\rho)(\{ (\theta, j): \theta\in \Theta_j\}) \ge (1-\delta^{\e/2})|E|.
        \]    
        By Markov's inequality, this implies that
        \[
        \mu(\Theta) \geq 1-\delta^{\e/2}.
        \]  
        Finally, if $\theta\in \Theta$ and $G\subset E$ satisfies $|G|\geq \delta^{\e}|E|$, then there is $j$ such that $\theta\in \Theta_j$ and $|G\cap F_j|\geq \delta^{2\e}|F_j|$, which in light of \eqref{it.conclusion} yields the  claim with $\e/2$ in place of $\e$.
    \end{proof}


\end{document}